\documentclass[12pt]{amsart}
\usepackage{amscd}
\usepackage{verbatim}
\usepackage[russian,english]{babel}
\usepackage{amssymb, amsmath, amsthm, amscd,ifthen}
\usepackage[dvips]{graphics}
\usepackage[cp866]{inputenc}
\usepackage{graphicx}
\usepackage{epsfig}
\usepackage{hyperref}
\usepackage[all,cmtip]{xy}
\usepackage{tikz-cd}
\usepackage[all]{xy}

\usepackage{amsmath,amssymb,amscd,}
\usepackage[all]{xy}

\usepackage{color}

\usepackage[dvips]{graphics}
\usepackage[cp866]{inputenc}
\usepackage{graphicx}
\usepackage{epsfig}

\theoremstyle{plain}
\newtheorem{theorem}{Theorem}
\newtheorem{lemma}[theorem]{Lemma}
\newtheorem{corollary}[theorem]{Corollary}
\newtheorem{proposition}[theorem]{Proposition}
\newtheorem{claim}[theorem]{Claim}

\theoremstyle{definition}
\newtheorem{definition}{Definition}

\theoremstyle{plain}
\newtoks\thehProclaim
\newtheorem*{Proclaim}{\the\thehProclaim}

\theoremstyle{definition}
\newtoks{\thehRemark}
\newtheorem*{Remark}{\the\thehRemark}

\renewcommand{\leq}{\leqslant}
\renewcommand{\geq}{\geqslant}
\DeclareMathOperator{\Int}{Int}
\DeclareMathOperator{\ind}{ind}
\begin{document}

	\title{Affine transverse foliations in sphere bundles}

	\author{Ilya Alekseev, Ivan Nasonov,   Gaiane Panina }
	
	\address{I. Alekseev: St. Petersburg State University; G. Panina:  St. Petersburg State University; gaiane-panina@rambler.ru; Ivan Nasonov: St. Petersburg State University}

	\subjclass[2000]{55R10, 53C12 }
	
	\keywords{Transverse foliation, amenable fundamental group, Euler number}

	\begin{abstract}  Let~$S^{n-1}\rightarrow E \rightarrow M^n$ be an oriented sphere bundle  supporting a {smooth} affine transverse foliation.

We  give an upper bound for the Euler number of the bundle. 

We also give a new and elementary proof of the following fact: if  the fundamental group  {$\pi_1(M^n)$} is amenable, then the Euler number of the bundle vanishes.

	\end{abstract}

	\maketitle

\section{Introduction}\label{SecIntro}

\subsection{Affinely foliated sphere bundles}
{An \textit{oriented sphere bundle} over a closed smooth oriented manifold~$M^n$ is }a locally trivial fiber bundle
$$\pi:E\rightarrow M^n$$ whose fibers are oriented spheres~$S^{n-1}$.

An \textit{oriented sphere bundle with a smooth transverse affine foliation}~$F$ (or an \textit{affinely foliated bundle}, for short) is an oriented sphere bundle that is obtained from the following construction:

There is an oriented real vector bundle~$\widetilde{E}\rightarrow M^n$ of rank~$n$.
The total space~$\widetilde{E}$ carries a \textit{transverse foliation}~$\widetilde{F}$ whose smooth~$n$-dimensional leaves are transverse to the fibers. 
Equivalently,~$\widetilde{E}\rightarrow M^n$ admits a flat connection.
The foliation~$\widetilde{F}$ is \textit{affine}: {the associated holonomies} are linear homomorphisms. 
In particular, the zero section is a leaf. 
The foliation~$\widetilde{F}$ induces a transverse foliation~$F$ on the associated sphere bundle~$E$.
     
\medskip

Once orientations of~$M^n$ and the fibers are fixed, the Euler class of the bundle may be identified with an integer~$\mathcal{E}$. 
When~$n$ is odd, the Euler number vanishes for any oriented sphere bundle, so {our results make sence in even dimensions only}. 


\medskip

\textbf{Example. } The trivial bundle~$M^n\times S^{n-1}\rightarrow M^n$ has a trivial foliation by  leaves~$M^n\times \{\omega\}$.  Its Euler number is zero.

\bigskip

\subsection{Amenable fundamental group}
Our first aim is to give an elementary proof of the following:

\begin{theorem}\label{ThmMain}
Let~$M^n$ be a closed oriented smooth manifold whose fundamental group is amenable. If   an oriented sphere bundle  over $M^n$
 admits a smooth transverse affine foliation, then the Euler number of the bundle  vanishes.
\end{theorem}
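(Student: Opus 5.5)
The plan is to compute the Euler number as the obstruction to a nonvanishing section, built cell by cell from the flat structure, and then to use amenability to average this obstruction down to zero. Since the foliation is affine, the vector bundle $\widetilde{E}$ is the flat bundle $\widetilde{M}\times_{\rho}\mathbb{R}^n$ for a holonomy representation $\rho:\pi_1(M^n)\to GL^+(n,\mathbb{R})$. Choose a triangulation of $M^n$ and lift it to $\widetilde{M}$. A section of $\widetilde{E}$ is then the same thing as a $\rho$-equivariant map $s:\widetilde{M}\to\mathbb{R}^n$. We build one combinatorially. Pick a generic vector $v\in\mathbb{R}^n$. On one lift of each vertex put $s=\rho(g)v$, extending equivariantly, and extend linearly over simplices. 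The Euler number $\mathcal{E}$ is the signed count of zeros over one fundamental domain. On each top simplex $\sigma$ the local contribution is the local degree of the affine map, which lies in $\{-1,0,1\}$: it is nonzero precisely when $0$ lies in the convex hull of the $n+1$ vertex vectors. Summing gives $\mathcal{E}=\sum_{\sigma}\epsilon(\sigma)$.

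The amenability step follows. Using amenability, take a Følner set $\Phi_k\subset\pi_1$, and let $D_k$ be the union of the translates $g\cdot D$, $g\in\Phi_k$, of a fundamental domain $D$. The signed zero count of $s$ inside $D_k$ equals $|\Phi_k|\,\mathcal{E}$, because the construction is equivariant. The same count also equals the degree of $s/|s|$ restricted to $\partial D_k$, viewed as a map to $S^{n-1}$. The key observation is that the direction of $s$ at a point $\tilde x$ depends only on the vertex values of the simplex containing it. So we may use a different section built from the same data, one that is not linear in the vectors. Normalize all vertex vectors to the unit sphere, and on each simplex spanned by unit vectors $u_0,\dots,u_n$ take the map to the geodesic simplex with those vertices whenever they lie in an open hemisphere. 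The crucial claim is this: for a rank-$n$ flat bundle over an $n$-manifold, we can redo the construction so that every simplex has vertex vectors in an open hemisphere, except on a set whose count grows only like the boundary. One way to get this is to choose the vertex vectors generically and observe that the degree of a simplicial map on the boundary $\partial D_k$ is bounded in absolute value by the number of $(n-1)$-simplices of $\partial D_k$. Indeed, the degree of a piecewise-"straight" map from a triangulated $(n-1)$-cycle to $S^{n-1}$ is at most the number of simplices, since each simplex covers a generic point at most once (this is the standard Milnor--Wood style estimate). That bound is $C\cdot|\partial\Phi_k|$, where $C$ is the number of boundary simplices of $D$. Hence $|\Phi_k|\,|\mathcal{E}|\le C|\partial \Phi_k|$. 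By the Følner property, dividing by $|\Phi_k|$ and letting $k\to\infty$ forces $\mathcal{E}=0$.

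The main obstacle is making the boundary degree estimate honest. The map on $\partial D_k$ is the direction of the linear interpolation, which is defined only if $s$ has no zeros on $\partial D_k$; genericity of $v$ under the countably many $\rho(g)$ secures this. On each $(n-1)$-simplex the map is then the radial projection of an affine simplex not containing $0$. Such a projection is injective onto its image, since a ray from $0$ meets an affine $(n-1)$-simplex not through $0$ at most once. So each simplex contributes at most $\pm1$ to the degree at a regular value. I would verify this carefully, together with the equality of the interior zero count and the boundary degree via the relative Euler class. That gives a clean proof using only linearity of the holonomies.
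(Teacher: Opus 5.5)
Your argument is correct, and it takes a genuinely different route from the paper.

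\textbf{How the two proofs differ.} The paper never works with a single section. It takes many parallel flat quasisections over $Q=\bigcup_{g\in G}gF$ for a simple fundamental domain $F$. From these it builds a random partial section by random good colorings, random signs (Smillie's averaging) and random sheets, filled in by Sullivan's geodesic simplices. It then bounds the expected index vertex by vertex: non-essential vertices weigh zero (Lemma~\ref{LemmaNonEss}), and an essential vertex weighs at most $\frac{(\mathcal N^\partial)^{n+1}+(n+1)\mathcal N(\mathcal N^\partial)^n}{2^n(\mathcal N^\partial+\mathcal N)^{n+1}}$ (Lemma~\ref{LemmaEss}). F{\o}lner sets then make this $O(\varepsilon^n)$.

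You instead take one global $\rho$-equivariant piecewise-linear section. By equivariance, its index sum over the F{\o}lner union $D_k$ is $|\Phi_k|\,\mathcal E$. That sum equals the degree of $s/|s|$ on the boundary cycle $\partial D_k$; this is a chain-level argument, since $D_k$ need not be a manifold. The degree is at most the number of boundary $(n-1)$-simplices, because an affine face missing the origin projects injectively to $S^{n-1}$. Hence $|\mathcal E|\le C\,|\partial\Phi_k|/|\Phi_k|\to 0$.

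\textbf{What each approach buys.} Your route is much shorter and needs no simple fundamental domain, no probabilistic averaging and no $\pm$ trick; it is an isoperimetric form of the Milnor--Sullivan count. The paper's local vertex-weight machinery is what also yields the explicit non-amenable bound of Theorem~\ref{ThmMainGeneralCase}, with the factor $2^{-n}$ and the dependence on $\nu,\lambda$. Your global boundary count does not give that directly.

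\textbf{Points to clean up.}
\begin{enumerate}
\item Drop the paragraph about geodesic simplices in open hemispheres. Its ``crucial claim'' is not established, and for generic vertex vectors it is false: only the \emph{signed} count of simplices containing the origin in their hull is small, not their number. The boundary-degree estimate alone closes the proof.
\item A single generic $v$ does not secure genericity. For even $n$, a negative scalar such as $\rho(g)^{-1}\rho(h)=-I$ lies in $GL^+(n,\mathbb{R})$. An edge with endpoint values $\rho(g)v,\rho(h)v$ then carries a zero for every $v$. Choose independent generic vectors at the chosen lifts of the vertices of a genuinely simplicial triangulation, and take $D$ to be a union of one lift of each top simplex.
\item The neighbours of $gD$ are the $gsD$ with $s$ in a finite set. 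So you need $|\Phi_k s\,\Delta\,\Phi_k|/|\Phi_k|\to 0$, that is, a right F{\o}lner sequence. The inverses of the sets in a left F{\o}lner sequence give one.
\end{enumerate}
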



Our proof relies on ideas of D. Sullivan \cite{Sullivan} and  on M. Kazarian's averaging principle \cite{Kaz}, together with the F{\o}lner property of amenable groups.
\medskip

\subsection{The general case}

It is known:

\begin{theorem}\label{ThmGromovSmillie} \cite{Gromov} (Milnor--Sullivan--Smillie Theorem)
  For an affinely foliated {sphere} bundle, the Euler number  satisfies
  
  $$|\mathcal{E}|\leq \frac{||M^n||}{2^n},$$
where $||\cdot||$ is the simplicial volume.

\end{theorem}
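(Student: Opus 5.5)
The plan is to follow Gromov's bounded cohomology argument in the simplicial form of Smillie and Sullivan. We express $\mathcal{E}$ as the evaluation of a bounded cocycle on the fundamental class. We then bound that cocycle uniformly by $2^{-n}$ on every singular simplex.

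\emph{Step 1: a cocycle representing the Euler class.} Since $\widetilde F$ is a flat linear structure, the bundle is induced from a representation $\rho:\pi_1(M^n)\to GL^+_n(\mathbb R)$. For a singular simplex $\sigma:\Delta^n\to M^n$, lift it to the universal cover and take the developed fibers at the vertices. The flat connection identifies all fibers over $\sigma$ with one copy of $\mathbb R^n$, so the $n+1$ vertices give $n+1$ vectors $v_0,\dots,v_n$ (holonomy-transported images of a fixed generic reference vector, or rather of a generic section value). Define $c(\sigma)$ to be $\pm1$ if $0$ lies in the interior of the convex hull of $v_0,\dots,v_n$, with the sign given by the orientation of the simplex $[v_0,\dots,v_n]$, and $0$ otherwise. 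The standard obstruction-theory argument, extending a nonvanishing section skeleton by skeleton using straight-line (affine) extensions in the flat trivialization, shows that $c$ is a cocycle representing the Euler class. Hence $\mathcal{E}=\langle c,[M^n]\rangle$. Since $\|c\|_\infty\le1$, this already gives $|\mathcal{E}|\le\|M^n\|$.

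\emph{Step 2: averaging to gain $2^{-n}$.} Replace $v_i$ by $\varepsilon_i v_i$ with signs $\varepsilon\in\{\pm1\}^{n+1}$. Each choice again gives a cocycle $c_\varepsilon$ representing $\prod\varepsilon_i$-twisted versions of the Euler class. Because $-\mathrm{id}$ acts on $S^{n-1}$ with degree $(-1)^n=1$ for even $n$, one checks that all $c_\varepsilon$ represent the same class. The same-class claim follows by changing the reference section at a vertex, which amounts to composing with the antipodal map. Define $\bar c=2^{-(n+1)}\sum_\varepsilon c_\varepsilon$, which also represents $\mathcal{E}$. The key combinatorial fact is this: for $n+1$ vectors in general position in $\mathbb R^n$, exactly two of the $2^{n+1}$ sign patterns $\pm(\varepsilon)$ place $0$ in the convex hull. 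The reason is that there is a unique linear relation $\sum\lambda_iv_i=0$ up to scale, and $0$ lies in the hull iff all $\varepsilon_i\lambda_i$ have the same sign. Moreover, these two patterns contribute with equal sign when $n$ is even. Hence $|\bar c(\sigma)|\le 2\cdot2^{-(n+1)}=2^{-n}$.

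\emph{Step 3: conclusion.} Take a real cycle $z=\sum a_j\sigma_j$ representing $[M^n]$ with $\sum|a_j|\le\|M^n\|+\delta$. Then $|\mathcal{E}|=|\bar c(z)|\le 2^{-n}(\|M^n\|+\delta)$. Letting $\delta\to0$ gives the result. Degenerate (non-generic) simplices are handled by a small generic perturbation of the reference vectors, since the cocycle class does not change.

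The main obstacle is Step 1 together with the sign check in Step 2. One must verify carefully that the ``origin in the simplex'' cochain is a genuine cocycle representing the Euler class, and that the sign-averaged versions are cohomologous to it. I would prove this by exhibiting explicit coboundaries relating $c_\varepsilon$ and $c$, via changing one vertex vector through a generic path. Evenness of $n$ enters exactly here.
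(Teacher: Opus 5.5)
Your route is the one the paper sketches at the end of the introduction. Sullivan's origin-in-the-hull cochain (value $\pm1$ or $0$, as in Claim~\ref{LemmaConv}) gives $|\mathcal{E}|\le\|M^n\|$. Smillie's $\pm$-averaging then improves this to $2^{-n}$: exactly two of the $2^{n+1}$ sign patterns survive, and for even $n$ they contribute the same sign. Steps~1 and~3 and your count of surviving patterns are fine.

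The gap is in Step~2, where you argue that the averaged cochain still represents $\mathcal{E}$. For a fixed $\varepsilon\in\{\pm1\}^{n+1}$ attached to vertex \emph{positions}, $c_\varepsilon$ is not a cocycle. The reason is that a vertex of an $(n+1)$-simplex occupies different positions in different faces, so it receives different signs. For example, view $c$ as a function of the transported vectors $w_0,\dots,w_{n+1}$ of an $(n+1)$-simplex and take $\varepsilon=(1,\dots,1,-1)$. Comparing with $\delta c(w_0,\dots,w_n,-w_{n+1})=0$ gives
$$\delta c_\varepsilon(w_0,\dots,w_{n+1})=(-1)^{n+1}\bigl(c(w_0,\dots,w_{n-1},-w_n)-c(w_0,\dots,w_n)\bigr).$$
This equals $\pm1$ whenever $0$ is interior to the convex hull of $w_0,\dots,w_n$. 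So no coboundary can relate $c_\varepsilon$ to $c$, and the plan in your last paragraph cannot succeed.

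Suppose instead you flip the section at \emph{points} of $M^n$, which is what ``changing the reference section at a vertex'' does. Then each flipped cochain is a genuine Euler cocycle. But vertices of $\sigma$ lying at the same point of $M^n$ now get tied signs. On a simplex with all vertices at one point, flipping changes nothing for even $n$, so there is no $2^{-n}$ gain. Nothing in Step~3 keeps such simplices out of a near-optimal cycle.

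What is true, and suffices, is that the full average $\bar c(\sigma)=2^{-(n+1)}\sum_{\varepsilon}c(\varepsilon_0v_0,\dots,\varepsilon_nv_n)$ is a cocycle cohomologous to $c$. It is a cocycle because deleting a vertex commutes with averaging over independent signs:
$$\delta\bar c(\tau)=2^{-(n+2)}\sum_{\varepsilon\in\{\pm1\}^{n+2}}(\delta c)(\varepsilon_0w_0,\dots,\varepsilon_{n+1}w_{n+1})=0.$$
To identify its class, decorate vertices with signs, as follows.
\begin{itemize}
\item Let $X$ be the product of the singular simplicial set of $M^n$ with the contractible simplicial set whose $k$-simplices are the elements of $\{\pm1\}^{k+1}$.
\item The projection $p(\sigma,\varepsilon)=\sigma$ is a quasi-isomorphism on chains, and $\iota(\sigma)=2^{-(k+1)}\sum_{\varepsilon}(\sigma,\varepsilon)$ is a chain map with $p\iota=\mathrm{id}$.
\item The cochains $(\sigma,\varepsilon)\mapsto c(\varepsilon_0v_0,\dots,\varepsilon_nv_n)$ and $(\sigma,\varepsilon)\mapsto c(v_0,\dots,v_n)$ are Sullivan's obstruction cocycles of the pulled-back bundle. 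They correspond to the vertex sections $(x,\epsilon)\mapsto\epsilon s(x)$ and $(x,\epsilon)\mapsto s(x)$, so they are cohomologous.
\item Applying $\iota^*$ gives $[\bar c]=[c]$.
\end{itemize}
This is exactly the independence of signs that the paper's cellular version of the trick (Lemma~\ref{LemmaEss}) gets for free: the $n+1$ cells around a vertex of $\mathcal{D}$ are distinct and carry independent signs.

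You also need a convention for degenerate tuples that preserves the cocycle identity. Flipping one of two coinciding vectors creates an antipodal pair, and perturbing the reference vector does not remove it.
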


\bigskip

Our result is stated in different terms, so let us start with some definitions.

\medskip

Let $$pr:\mathcal{U}\rightarrow M^n$$ be the universal cover of $M^n$.
A \textit{ fundamental domain} $F \subset \mathcal{U}$ is a closed connected set  with piecewise smooth boundary such that 

\begin{enumerate}
  \item $\mathcal{U}$ is tiled by copies of $F$, that is,
 $$\bigcup_{g \in \Gamma}~ gF=\mathcal{U},$$ where  $\Gamma\cong \pi_1(M^n)$ is the  deck transformations group, and
  \item the sets  $F$ and $gF$ have disjoint interiors whenever $g \neq id$.
\end{enumerate}

\medskip

The \textit{degree  $Deg(y)$ of a point} $y\in \mathcal{U}$ is  the number of elements $g \in \Gamma$ such that $y\in gF$.

A fundamental domain $F$ is called \textit{simple}  if  the dimension of the stratum $\{y: \ Deg(y) \leq k\}$ does not exceed $n-k+1$. In particular this means that $\max_{y\in \mathcal{U}}~Deg(y)\leq n+1.$

A point $y$ is called a \textit{vertex} of a simple fundamental domain $F$ if its degree equals  $n+1$.
Its projection $pr(y)$ is called \textit{a vertex of} $M^n$ (related to the fundamental domain $F$). 
Set $\nu=\nu(M^n,F)$ be the number of vertices  of $M^n$.

The boundary $\partial F$ is stratified by the values of $Deg$.
A \textit{facet} of $F$ is a  connected component of the stratum with $Deg=2$. The projection of a facet is called\textit{ a facet of} $M^n$  (related to the fundamental domain $F$). Denote by $\lambda =\lambda(M^n,F)$ the number of facets of $M$. 
{A simple observation is that the number of facets of $F$ is  twice the number of facets of $M^n$.}

The set of facets of $F$ gives rise to a set of generators $\mathcal{G}$ of the group $\Gamma$: two copies of $F$ share a facet iff they differ by a generator.
In other words, 

    $$ g \in \mathcal{G}\   \hbox{ iff } \dim(F\cap gF) =n-1.$$

Clearly, $\mathcal{G}=\mathcal{G}^{-1}$, and $|\mathcal{G}|\leq 2\lambda$.  
\medskip

Our second main result is:

\begin{theorem}\label{ThmMainGeneralCase}
Let~$M^n$ be a closed oriented smooth manifold, and let~$S^{n-1}\rightarrow E\rightarrow M^n$ be an oriented sphere bundle
supporting a smooth transverse affine foliation. Let $F$ be a simple fundamental domain. Then 
$$|\mathcal{E}|\leq  \frac{\nu}{2^n}\cdot  \left( 1 -\dfrac{n(n+1)}{2}\cdot \dfrac{(2\lambda-1)^{n-1}}{(2\lambda)^{n+1}}\right), $$

where $\nu=\nu(M^n,F)$ is the number of vertices of $M^n$, and $\lambda=\lambda(M^n,F)$ is the number of facets of $M^n$, related to $F$. 
\end{theorem}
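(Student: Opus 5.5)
Following Sullivan and Kazarian's averaging principle, I will compute the Euler number as a sum of local contributions over the vertices of $M^n$, and then average over choices to bound each contribution.

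\emph{Step 1: localization.} Choose a generic point $\omega\in S^{n-1}$ in the fiber over a base point $x_0\in\operatorname{Int}F$. Pulling back to the universal cover trivializes the foliated bundle, $E_{\mathcal U}\cong \mathcal U\times S^{n-1}$ (leaves horizontal). The affine holonomy gives a representation $\rho:\Gamma\to GL^+_n(\mathbb R)$. On each tile $gF$ we take the section that is constant in this trivialization, equal to $\rho(g)\omega$. These sections disagree across facets, so we interpolate, first along facets and then along lower strata. Because $\rho$ is linear, two vectors $v,w$ that are not antipodal are joined by a canonical shortest geodesic on $S^{n-1}$, which gives a well-defined interpolation. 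Simplicity of $F$ makes the dual cell structure simplicial: near a vertex $y$ the $n+1$ tiles $g_0F,\dots,g_nF$ meet like the facets of a simplex. The interpolated section then extends to the whole $n$-skeleton except possibly one zero near each vertex. The local index at $y$ is the degree of the spherical simplex spanned by $\rho(g_0)\omega,\dots,\rho(g_n)\omega$ on $S^{n-1}$. Equivalently, it is $\pm1$ if $0$ lies in the interior of the convex hull of these vectors, and $0$ otherwise. Hence $\mathcal E=\sum_{\text{vertices of }M^n}\operatorname{ind}_y(\omega)$.

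\emph{Step 2: averaging, which gives the leading term $\nu/2^n$.} The identity holds for every generic $\omega$, so I average over $\omega$ in the uniform measure, and also over the choice of representative tiles, replacing $\omega$ by $-\omega$ or varying the base tile. Following Smillie's trick, I flip each vector $\rho(g_i)\omega$ independently by a sign $\varepsilon_i$. Among the $2^{n+1}$ sign patterns, at most two (one pattern and its negative) give a simplex containing $0$. A cleaner way to set this up is Kazarian's averaging over $2^n$ sign choices per vertex. This shows that the average of $|\operatorname{ind}_y|$ is at most $2/2^{n+1}=1/2^n$. Summing over vertices gives $|\mathcal E|\le \nu/2^n$.

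\emph{Step 3: the correction term.} To improve the bound, I need to find configurations where the contribution is forced to vanish. If two of the group elements at a vertex coincide after the sign-flipping randomization, then the $n+1$ vectors include a repeated or antipodal pair. The simplex is then degenerate and the index is $0$. I would set up the randomization as a random walk on the generating set $\mathcal G$ with $|\mathcal G|\le2\lambda$: the tiles around a vertex are reached from the base tile by words in the generators. Label each facet-adjacent pair at a vertex with a random choice. There are $\binom{n+1}{2}=n(n+1)/2$ pairs $\{i,j\}$ at a vertex. For each pair, the event that the relabeling makes $g_i$ and $g_j$ collide has probability at least $(2\lambda-1)^{n-1}/(2\lambda)^{n+1}$: a reduced word of length $n-1$ that avoids backtracking, times two specific generator choices. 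These events are disjoint, so adding them up removes the fraction $\frac{n(n+1)}{2}\cdot\frac{(2\lambda-1)^{n-1}}{(2\lambda)^{n+1}}$ of the $1/2^n$ mass. This yields the stated inequality.

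\emph{Main obstacle.} I expect Step 3 to be the hardest part. It requires a precise randomization scheme, combining Kazarian averaging with random choices of lifts through generators, that does three things at once: it keeps $\mathcal E$ invariant, it is compatible across neighboring vertices so that the section stays globally consistent, and it produces exactly the counted degenerate events with the claimed probability. I would first try a product measure over facets of $M^n$, where each facet independently chooses a generator. Then I would verify disjointness of the collision events and count the non-backtracking words.
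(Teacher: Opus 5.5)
Your Step~2 fails as stated, and the whole argument depends on it.

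\textbf{Step 2: the sign flips are not independent.} In Step~1 the section over the single top cell $pr(\Int F)$ of $M^n$ is one flat vector $\omega$. The $n+1$ tiles $g_0F,\dots,g_nF$ at a vertex are lifts of that same cell, so all $n+1$ vectors $\rho(g_i)\omega$ are fixed by one choice.
\begin{itemize}
\item Replacing $\omega$ by $-\omega$ flips all of them at once, by linearity of $\rho$. Since $n$ is even, the antipodal map of $S^{n-1}$ has degree $+1$, so the index does not even change.
\item Changing the base tile only replaces $\omega$ by another single vector.
\item Independent flips $\varepsilon_i$ would require a sign per tile $gF$. That is a non-equivariant section over $\mathcal U$, and it does not descend to $M^n$. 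Signs must be attached to top cells of $M^n$, not to vertices.
\end{itemize}
So averaging over $\omega$ gives no factor $2^{-n}$. For instance, for $n=2$, if the holonomies at a vertex are rotations by $0,2\pi/3,4\pi/3$, the index there is $\pm1$ for every $\omega$.

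The paper creates the needed independence in three moves. It cuts $F$ by thin walls into cells $d_m$ forming a regular complex, so the $n+1$ cells at each vertex are distinct cells of $M^n$. Each cell then gets its own generic starting point through a random good colouring, and an independent sign through the quasisections $\mathcal Q_i^{\pm}$. The non-essential vertices created by the cutting are cancelled by swapping the colours of two cells across a thin wall (Lemma~\ref{LemmaNonEss}).

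\textbf{Step 3: no mechanism for the correction term.} The count ``non-backtracking words of length $n-1$ times two generator choices'' is not derived from any randomization you define, and neither is the disjointness of the collision events. Forcing two positions to carry equal or antipodal vectors also conflicts with independent signs. In the antipodal case it breaks the geodesic filling.

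In the paper the correction comes from \emph{cancellation}, not degeneracy:
\begin{enumerate}
\item One uses the quasisection over $Q_R=\bigcup_{g\in B_R}gF$ and chooses for each cell a random sheet among the $|B_R|$ available.
\item At an essential vertex, every tuple containing at least two regular sheets (for example, sheets from interior tiles) is cancelled by swapping those two sheets. This is legitimate because a regular sheet covers all of $U_v$.
\item This gives the weight bound of Lemma~\ref{LemmaEss}, namely $2^{-n}f(\mathbf x)$ with $f(\mathbf x)=\frac{\mathbf x^{n+1}+(n+1)\mathbf x^n}{(1+\mathbf x)^{n+1}}$ and $\mathbf x=\mathcal N^\partial/\mathcal N$.
\item Lemma~\ref{LemmaInterior} gives $\mathcal N\ge |B_{R-1}|$, and Lemma~\ref{lemmaNonAm} gives $\mathbf x\le 2\lambda-1$.
\item Since $f$ is increasing, it remains to note that $1-f(2\lambda-1)=\sum_{k\le n-1}\binom{n+1}{k}(2\lambda-1)^k/(2\lambda)^{n+1}\ge \binom{n+1}{2}(2\lambda-1)^{n-1}/(2\lambda)^{n+1}$.
\end{enumerate}
So the subtracted term is just the binomial term for exactly two of the $n+1$ cells drawing interior tiles. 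It is not a collision probability.
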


Our proof relies on ideas of D. Sullivan and M. Kazarian,  the quasisections technique, and the Smillie's averaging principle.
\medskip


\textbf{Example.} For~$n=2$ all circle bundles that support affine transverse foliations are found by J. Milnor:

\begin{theorem}(Milnor Theorem)\label{MW}\cite{Milnor, MilnorIneq}
An oriented circle bundle~$E\xrightarrow[\text{}]{\pi} S_g$ over a closed oriented surface of genus~$g$ 
admits a smooth transverse affine foliation if and only if the Euler number of the bundle satisfies 
$$|\mathcal{E}|\leq g-1.$$
\end{theorem}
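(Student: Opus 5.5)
Since Milnor's theorem is an equivalence, the plan has two halves. The inequality comes straight from Theorem~\ref{ThmGromovSmillie}, and the converse needs an explicit realization of every admissible value.

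\textbf{Necessity.} I would obtain the inequality directly from Theorem~\ref{ThmGromovSmillie} with $n=2$. For a closed oriented surface of genus $g\geq 1$ the simplicial volume is $\|S_g\|=4g-4$; this is Gromov's computation $\|S_g\|=-2\chi(S_g)$. Hence
$$|\mathcal{E}|\leq \frac{4g-4}{2^2}=g-1 .$$
For $g=0$ the base $S^2$ is simply connected. So the flat bundle $\widetilde{E}$ has trivial holonomy and is trivial, and $\mathcal{E}=0$. Note that Milnor's inequality, as stated, is violated here, since $0>g-1=-1$; the sphere therefore has to be excluded (take $g\geq 1$), and the theorem is meant for $g\geq 1$. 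For $g=1$ one can alternatively use Theorem~\ref{ThmMain}, because $\mathbb{Z}^2$ is amenable. If one wants to avoid the simplicial volume, the same bound follows from the classical argument. Lift the holonomy generators $\rho(a_i),\rho(b_i)\in GL^+(2,\mathbb{R})$, which retracts onto $SL(2,\mathbb{R})$, to $\widetilde{SL(2,\mathbb{R})}$. The product of commutators of the lifts is translation by $\mathcal{E}$ on the universal cover of the circle of rays. One then bounds the translation number of a product of $g$ commutators using Milnor's estimate for lifts of linear maps.

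\textbf{Sufficiency.} I need a representation $\rho:\pi_1(S_g)\to SL(2,\mathbb{R})$ with any prescribed Euler number $e$, $|e|\leq g-1$. The representation acts on $\mathbb{R}^2$, and the flat vector bundle's ray bundle is the circle bundle. The steps are as follows.

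\begin{enumerate}
\item For $e=g-1$, take a Fuchsian representation into $PSL(2,\mathbb{R})$ and lift it to $SL(2,\mathbb{R})$. This is possible because the obstruction is $w_2$-type and vanishes on the choice of a spin structure. The projectivized bundle $\mathbb{R}P^1$ has Euler number $\pm\chi=\pm(2g-2)$, the unit tangent bundle. The ray bundle double covers it fiberwise, which halves the Euler number and gives $g-1$.
\item For intermediate $e=k$, split $S_g=\Sigma_{k,1}\cup\Sigma_{g-k,1}$ along a separating curve $c$. On $\Sigma_{k,1}$ use a hyperbolic structure with geodesic boundary, lifted to $SL(2,\mathbb{R})$. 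On $\Sigma_{g-k,1}$ use a representation with boundary holonomy equal to the same hyperbolic element $h$ and relative Euler number $0$. For instance, write $h=[A,B]$ with $A,B$ hyperbolic, chosen so that the canonical lift of the commutator is the lift of $h$ with translation number $0$, and send the remaining generators to the identity.
\item Additivity of the relative Euler number then gives total $k$. Here relative Euler numbers are computed with respect to the unique lift of $h$ that has a fixed point.
\item Negative values are obtained by reversing the orientation of the fibers, or by conjugating $\rho$ by a reflection.
\end{enumerate}

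\textbf{Main obstacle.} I expect the hardest step to be the bookkeeping in the gluing. One must check that the relative Euler numbers are additive when the boundary holonomy is hyperbolic. One must also check that the commutator $[A,B]$ can be realized with its canonical lift having translation number $0$ rather than $\pm1$. I would verify the latter with an explicit pair of diagonal/shear-type matrices and a direct rotation-number computation.
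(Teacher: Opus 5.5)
\textbf{Verdict: genuine gap in the sufficiency half (Steps 2--3).}

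\textbf{What is fine.} Your necessity half is what the paper itself does. The paper only remarks that Theorem~\ref{ThmGromovSmillie} with $\|S_g\|=4g-4$ gives the ``only if'' part, and it cites Milnor for the converse. You are also right that the statement needs $g\geq 1$: the trivially foliated product bundle over $S^2$ has $\mathcal{E}=0>g-1$. Step~1 of the sufficiency is fine too.

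\textbf{The gap.} It sits exactly at the check you postponed, and that check fails.
\begin{itemize}
\item Let $h=\rho(c)=\prod_{i\le k}[A_i,B_i]$ be the boundary holonomy of a lifted Fuchsian representation of $\Sigma_{k,1}$ with geodesic boundary. Commutators ignore the signs of the lifts, so $h$ does not depend on the lift.
\item In $\widetilde{SL(2,\mathbb{R})}$ one has $\prod[\tilde A_i,\tilde B_i]=z^{m}\tilde h_0$. Here $\tilde h_0$ is the unique lift of $\pm h$ with a fixed point, $z$ is the central element covering $-I$ (the half turn of the ray circle), and $m=\pm\chi(\Sigma_{k,1})=\pm(2k-1)$ is the relative Euler number of a Fuchsian structure with geodesic boundary (Goldman; for $k=1$ this is the classical fact $\operatorname{tr}[A,B]<-2$).
\item Since $m$ is odd and $\tilde h_0$ maps to an element with positive eigenvalues, you get $\operatorname{tr}h<-2$.
\item Hence $h$ fixes no ray. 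Every lift of $h$ to the universal cover of the ray circle has translation number an odd multiple of $\pi$. So ``the unique lift of $h$ that has a fixed point'' in Step~3 does not exist.
\item For the same reason, whenever $[A,B]=h^{\pm1}$, the lift $[\tilde A,\tilde B]$ is $z^{\mathrm{odd}}$ times the fixed-point lift. Thus any representation of $\pi_1(\Sigma_{g-k,1})$ with this boundary holonomy has odd relative Euler number. The ``relative Euler number $0$'' piece of Step~2 cannot exist, and no explicit matrices will produce it.
\end{itemize}
Your bookkeeping is off by one half: in ray units the Fuchsian piece contributes $k-\tfrac12$, not $k$. Separately, Step~2 cannot cover $k=0$, since $\Sigma_{0,1}$ is a disc and carries no hyperbolic structure with geodesic boundary.

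\textbf{The repair.} The complementary piece must contribute $+\tfrac12$ rather than $0$. The cleanest way to arrange this avoids relative Euler numbers entirely. For $1\le k\le g-1$:
\begin{itemize}
\item send $a_i,b_i$ with $i\le k+1$ to a lifted Fuchsian representation of $S_{k+1}$;
\item send $a_i,b_i$ with $i>k+1$ to $I$.
\end{itemize}
This is the pullback of the flat bundle of your Step~1 under a degree-one pinch map $S_g\to S_{k+1}$. By naturality $|\mathcal{E}|=k$; equivalently, appending commutators of central lifts of $I$ does not change the product of lifted commutators. The case $k=0$ is the trivial representation, and negative values follow from your Step~4.
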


\medskip

\textbf{Remark.} The group~$\pi_1(S_g)$ is amenable if and only {if~$g\leq 1$}, so Theorem~\ref{MW} agrees with Theorem~\ref{ThmMain}.

  For $n=2$, Theorem \ref{ThmGromovSmillie} implies the ``only if'' part of the Milnor theorem.   This shows that one cannot expect a much better estimate of the Euler number.

\bigskip

\textbf{Example.}
  Let us attach $k$ handles $S^3\times [0,1]$ to the sphere $S^4$. The fundamental group of the resulting manifold is a free group with $k$ generators.
  Cutting each of the handles by $S^3\times {1/2}$, one gets a fundamental domain with $\nu=0$, so by Theorem
  \ref{ThmMainGeneralCase}, there are no affinely foliated bundles over this manifold except for those with $\mathcal{E}=0$.
  
  This result follows also from {Theorem \ref{ThmGromovSmillie}, } and \cite{Gromov}: the simplicial volume of the sphere with handles vanishes since it is the connected sum of $k$ tori $S^1\times S^3$.

\medskip

\textbf{Example.}
  A simple fundamental domain exists for any $M^n$ and can be constructed in many ways. For example,
  as follows: \begin{enumerate}
                                                                                        \item Equip $M^n$ with a  Riemannian metric.
                                                                                        \item Pick a point $x_0\in M^n$. The associated cut locus $CL(x_0)$
                                                                                        yields a  fundamental domain which is generically simple, \textcolor{red}{} 
                                                                                        \item The cut locus $CL(x_0)$ is stratified according to the number of shortest paths leading from $x\in CL(x_0)$ to $x_0$. Eliminate from $CL(x_0)$ connected strata with number of paths $2$ whenever these two paths give rise to a contractible loop.  The closure of the {preimage} $$pr^{-1}(M^n \setminus CL(x_0))\subset \mathcal{U}$$ gives a simple    fundamental domain.
                                                                                      \end{enumerate}

\bigskip

\medskip

\subsection{Computation of the Euler number}\label{SecComputeEuler}

The Euler number of an oriented sphere bundle~$E\rightarrow M^n$ is the obstruction to the existence of a continuous section. We briefly recall the following standard construction (see~\cite{FomFu}).

	 Assume
	that a partial section of the bundle~$s: M^n\setminus\{x_i\}\rightarrow E$ is defined everywhere except for a finite set of points~$x_1,...,x_l\in M^n$.
	For each point~$x_i$, choose   a small neighborhood~$U_i$ bounded by a  sphere~$C_{x_i}\subset M^n$. The sphere inherits the orientation from~$M^n$.  
	Next, choose a trivialization of the bundle in the neighborhood~$U_{x_i}$.
	The  restriction of the section~$s$
	defines  a map 
	$$s_{\mid_{S_{x_i}}}:S_{x_i}\rightarrow S^{n-1}.$$  
	The source and the target spaces are two oriented spheres, 
	so the degree of the map is well-defined. Set
	$$\ind_s(x_i):= \deg~ s_{\mid_{S_{x_i}}}.$$ 
	The individual indices~$\ind_s(x_i)$ depend on the section~$s$, but their sum  does not:
	
	\begin{proposition} \cite{Milnor}, \cite{FomFu}.
		In the above notation, the Euler number of the bundle equals the sum of indices:
$$\mathcal{E}(E\rightarrow M^n)=\sum_i \ind_s(x_i).$$
	\end{proposition}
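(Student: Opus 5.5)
The approach is classical obstruction theory: I will prove that the sum $\sum_i \ind_s(x_i)$ does not depend on the partial section $s$, and then identify it with the Euler number by evaluating on the fundamental class $[M^n]$ the primary obstruction cocycle of a particular, cellularly adapted partial section.

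\emph{Well-definedness of the local indices.} Fix a point $x_i$. Changing the trivialization over the disc $U_{x_i}$ composes $s_{\mid_{S_{x_i}}}$ with a map $S_{x_i}\to SO(n)$ or $\mathrm{Homeo}^+(S^{n-1})$, acting fiberwise. Since $U_{x_i}$ is contractible, any two trivializations are homotopic through trivializations, so the degree does not change. Shrinking the sphere $S_{x_i}$ inside $U_{x_i}\setminus\{x_i\}$ is a homotopy, so the degree is also independent of the choice of $U_{x_i}$. The orientation conventions are fixed by the orientations of $M^n$ and of the fibers.

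\emph{Independence of $s$.} Let $s,s'$ be two partial sections. By taking the union of the singular sets, and declaring index $0$ at a point where a section is actually defined, I may assume both are defined off the same finite set $\{x_i\}$. Next choose a CW structure on $M^n$ with all $x_i$ in the interiors of distinct top cells $e_i$, and use a small homotopy of $s'$ to make $s'$ agree with $s$ on the $(n-1)$-skeleton. This step uses that the fiber $S^{n-1}$ is $(n-2)$-connected: the difference cochain of $s$ and $s'$ on the $(n-1)$-skeleton is a cocycle in $C^{n-1}(M;\pi_{n-1}(S^{n-1}))$, so by the standard difference-cochain argument $s'$ can be modified only by a coboundary $\delta d$ on top cells. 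Here $\ind(x_i)$ is exactly the obstruction cocycle value on $e_i$, because the boundary sphere of $e_i$ is homotopic to $S_{x_i}$ in $e_i\setminus\{x_i\}$. A top cell containing no singular point contributes $0$. Hence
$$\sum_i \ind_{s'}(x_i)-\sum_i\ind_s(x_i)=\langle\delta d,[M^n]\rangle=\langle d,\partial[M^n]\rangle=0.$$

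\emph{Identification with $\mathcal{E}$.} The Euler class is, by definition (or by the standard theorem for sphere bundles with $(n-2)$-connected fiber), the primary obstruction class $o(E)\in H^n(M;\mathbb Z)$, with coefficients $\pi_{n-1}(S^{n-1})=\mathbb Z$ untwisted because the bundle is oriented. Build a section over the $(n-1)$-skeleton, which is possible by connectivity of the fiber, and extend it radially inside each top cell away from its center. Its singular points are then the cell centers, and the obstruction cocycle on $e$ equals the index at the center. Thus $\mathcal{E}=\langle o(E),[M^n]\rangle=\sum \ind$. Combined with the independence step, this proves the Proposition for any $s$.

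The main obstacle is the bookkeeping in the independence step. One must arrange an arbitrary partial section to be cellularly adapted without altering its indices, and check that orientation conventions make the local degree coincide with the cocycle value. I would handle the first point by noting that a homotopy of $s$ supported away from the $x_i$ leaves the degrees on the spheres $S_{x_i}$ unchanged. For the second, I would compare both quantities on one model case: the radial section of $TS^n$, whose index sum must be $\chi(S^n)$.
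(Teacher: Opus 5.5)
The paper gives no proof of this proposition. It takes the Euler number to be the obstruction to a section and cites Milnor and Fomenko--Fuchs. Your obstruction-theoretic argument is the standard proof behind those citations. Under the paper's definition your identification step is essentially definitional, so the route is the right one.

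One step is false as written and needs repair. You cannot in general homotope $s'$ so that it agrees with $s$ on the whole $(n-1)$-skeleton, and the difference cochain is not a cocycle. If either held, the obstruction values of $s$ and $s'$ would coincide cell by cell. That already fails for the trivial circle bundle over $T^2$, with $s$ constant and $s'$ having singularities of index $+1$ and $-1$ in two different top cells.

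What the $(n-2)$-connectivity of $S^{n-1}$ actually gives is agreement on the $(n-2)$-skeleton. This is achieved by a homotopy extended over $M^n$ minus small open balls around the singular points, so the indices do not change. The difference cochain $d\in C^{n-1}(M^n;\mathbb{Z})$ then satisfies $\delta d=o(s')-o(s)$, up to a convention-dependent sign. Your computation $\langle\delta d,[M^n]\rangle=\langle d,\partial[M^n]\rangle=0$ then goes through verbatim, and with this correction the argument is complete.

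A minor point concerns local well-definedness. You do not need trivializations to be homotopic through trivializations. It suffices that the transition map $U_{x_i}\to\mathrm{Homeo}^+(S^{n-1})$ is homotopic to a constant map. That constant is a single orientation-preserving homeomorphism, so it has degree $1$. In the paper's setting the structure group is $SO(n)$ anyway.
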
\label{PropEulerComput}

This construction applies not only for smooth manifolds, but also for finite cell complexes of dimension~$n$ provided that the points~$x_i$ lie inside~$n$-cells.
\medskip

\subsection{Proof of Theorems~\ref{ThmMain} and \ref{ThmGromovSmillie} via ingredients of M. Gromov, J. Hirsh, W. Thurston, and D. Sullivan}


The proof can be divided into three steps:
\medskip

	\textbf{(1)}	
 D. Sullivan \cite{Sullivan} proved that if an affine sphere bundle has a smooth transverse foliation, then its Euler number does not exceed the number of~$n$-simplices in any triangulation of  the base~$M^n$.

\begin{proof}
  Assume there is an { oriented  bundle  with a transverse affine foliation.}  Take a cell decomposition of~$M^n$ which is dual to some triangulation.
Choose a partial section which is defined on the interiors of the cells by taking parts of leaves of the foliation.  Using the affine structure of the foliation, extend the partial section first to the interiors of the cells of codimension~$1$, then  to the interiors of the cells of codimension~$2$, etc. Eventually one arrives at a continuous partial section~$s$ defined everywhere except for the vertices of the cell decomposition.  It is easy to show that each vertex~$v$
contributes an index~$\ind_s(v)$  whose absolute value is at most~$1$, which completes the proof.
\end{proof}

We are going to make use of this idea in our proof of Theorem~\ref{ThmMain}.
\medskip

\textbf{(2)} The Sullivan's result together with the definition of simplicial norm almost immediately imply that
$$|\mathcal{E}|\leq ||M^n||.$$   Moreover, the Smillie's averaging trick implies that
 $$|\mathcal{E}|\leq {\frac{1}{2^n}||M^n||}.$$

\textbf{(3)}   If~$\pi_1(M^n)$  is amenable, then  the simplicial volume~$||M^n||$  vanishes.
\begin{proof} This follows from the combination of the following facts:
    \textbf{ (A) } The simplicial volume~$||M^n||$  vanishes iff the top  bounded cohomology group is trivial.   \textbf{(B)} Bounded cohomology depends only on the fundamental group \cite{Gromov, Ivanov}.  \textbf{ (C)}  Bounded cohomologies of amenable groups are trivial \cite{HirshThurst}. 
\end{proof}

\section{Random quasisections}

\subsection{Flat quasisections} 

Assume that an affinely foliated {sphere} bundle \newline $\pi: E\rightarrow M^n$ is fixed. Let $pr: \mathcal{U}\rightarrow M^n$ be the universal cover and $Q\subset \mathcal{ U}$ be a  { closed} bounded   subset, such that~$pr(Q)=M^n$.

\medskip


\begin{lemma}
(1) There exists a map
$$q:Q\rightarrow E$$
whose image~$q(Q)$ lies in a single leaf of the foliation and such that the diagram below commutes.

\begin{equation}
\label{V_G_SquareDiagram2}
    \xymatrix{
   \mathcal{U}\ar[drr]_{pr} &&
Q \ar[ll]_{in} \ar[rr]^{q} &&  E \ar[lld]^{\pi} &\\
     && M^n  &\\    }
\end{equation}

(2) The map~$q$ is not unique. For a given point~$a\in Q$ one can set~$q(a)$ to be any \textit{starting point}~$\mathbf{p}$  in the fiber~$\pi^{-1}(pr(a))$. If $Q$ is connected, the choice of a starting point defines~$q$ uniquely.  

Otherwise one chooses a point $a_i$ in each of the connected components. The choice of  $p_i \in \pi^{-1}(a_i)$ defines~$q$ uniquely.\qed
\end{lemma}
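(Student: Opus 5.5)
The plan is to pull the foliated bundle back to the universal cover, where it becomes a trivial bundle with a product foliation, and then read off $q$ as the restriction of a horizontal section. Consider the pullback bundle $pr^*E\rightarrow \mathcal{U}$ together with the pulled-back foliation $pr^*F$. Its leaves are still $n$-dimensional and transverse to the fibers. Since $\mathcal{U}$ is simply connected, the holonomy of the flat connection on $pr^*E$ is trivial. Equivalently, every leaf of $pr^*F$ projects diffeomorphically onto $\mathcal{U}$, because $\mathcal{U}$ is simply connected and the fibers are compact, so leaves are covering spaces of the base.

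This gives a trivialization $pr^*E\cong \mathcal{U}\times S^{n-1}$ in which the leaves are exactly the slices $\mathcal{U}\times\{\omega\}$. It is constructed by fixing a base point $a\in\mathcal{U}$ and sending $(y,\omega)$ to the point over $y$ on the leaf through $(a,\omega)$. Compactness of the fiber $S^{n-1}$ and transversality ensure that path lifting along the foliation is defined for all paths, so the leaves are genuinely covering spaces, hence trivial ones over the simply connected $\mathcal{U}$.

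\textbf{Construction of $q$.} Given a starting point $\mathbf{p}\in\pi^{-1}(pr(a))$, which is the fiber of $pr^*E$ over $a$, let $L_{\mathbf{p}}\subset pr^*E$ be the leaf through it. This leaf is the image of a global section $\sigma_{\mathbf p}:\mathcal{U}\rightarrow pr^*E$. Define
$$q=\Phi\circ\sigma_{\mathbf p}\circ in,$$
where $\Phi:pr^*E\rightarrow E$ is the canonical bundle map.
\begin{itemize}
\item Commutativity of the diagram holds because $\pi\circ\Phi=pr\circ(\text{projection of }pr^*E)$.
\item The image lies in the single leaf $\Phi(L_{\mathbf p})$ of $F$, since $\Phi$ maps leaves into leaves; it is a local diffeomorphism respecting the foliations.
\end{itemize}
The condition $pr(Q)=M^n$ and the boundedness of $Q$ play no role in existence. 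Only the lifting matters.

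\textbf{Uniqueness.} If $Q$ is connected and $q'$ is another such map with $q'(a)=\mathbf p$ and image in one leaf, lift $q'$ to $pr^*E$ via $y\mapsto (y,q'(y))$. This lift is continuous and lies in the preimage of a leaf of $F$, which is a union of leaves of $pr^*F$. Those leaves are disjoint slices of the product structure, so they form a closed, locally separated family over $Q$. By connectedness, the lift stays in the single slice through $\mathbf p$, so it equals $\sigma_{\mathbf p}$ on $Q$.

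In the disconnected case one applies the same argument componentwise with the chosen points $a_i$ and $p_i$. I read the lemma's phrase ``image in a single leaf'' as meaning componentwise images in leaves, possibly different ones for different components.

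\textbf{Main obstacle.} The subtle point is this uniqueness step. A leaf of $F$ in $E$ may be dense, so ``lies in a single leaf'' has to be understood in the leaf topology, that is, $q$ continuous into the leaf with its intrinsic manifold structure. I would state this interpretation explicitly, or equivalently require $q$ to be the projection of a section of the product foliation upstairs. Then the argument reduces to the triviality of the pulled-back flat bundle over the simply connected $\mathcal{U}$.
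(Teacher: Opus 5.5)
Your proof is correct and is the standard argument the paper leaves implicit (the lemma is stated without proof): pulled back to the simply connected $\mathcal{U}$, the flat bundle becomes a product whose leaves are horizontal slices, and $q$ is the restriction of one slice. You are also right that in the disconnected case the ``single leaf'' claim holds only componentwise unless the $p_i$ are chosen on a common leaf.

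Your leaf-topology caveat for uniqueness is not needed, and your phrase ``closed, locally separated family'' is false in the ambient topology once holonomy orbits are infinite. Uniqueness holds in the ambient topology anyway: the preimage of a leaf of $F$ in $pr^*E\cong\mathcal{U}\times S^{n-1}$ is $\mathcal{U}\times H$ with $H$ a holonomy orbit, which is countable because $\pi_1(M^n)$ is, and a continuous map from a connected $Q$ into a countable subset of $S^{n-1}$ must be constant.
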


The triple~$\mathcal{Q}=(Q,a,\mathbf{p})$  is called a \textit{flat quasisection} related to the foliation (cf.~\cite{PanTurSh}).

The  image $q(Q)$ has no transverse self-crossings, but might have overlappings.  

\medskip

\subsection{The idea in a nutshell}
 Consider a collection of specially designed \textit{parallel} flat  quasisections, that is, a collection of flat quasisection with one and the same $Q$, but with different starting points $\textbf{p}$. Each of the quasisections  comes with its image under central symmetry on the sphere $S^{n-1}$ (this is an application of the Smillie's averaging idea). Based on this collection, we choose a random partial section which is defined everywhere except a finite number of points (this is Kazarian's averaging principle). For the random partial section we compute the expectation of the Euler number. On the one hand, the expectation is equal to $\mathcal{E}$. On the other hand, changing the order of summation gives a local formula for the Euler number.

    \subsection{Collection of parallel flat quasisections}
    
 Assume that a simple fundamental domain $F$ is fixed. Its boundary  {is} called \textit{bold},   and  {is} depicted by bold lines in figures.
 
 Let us cut $F$ by piecewise smooth cuts into smaller pieces $d_i$ such that:
 
 \begin{enumerate}
   \item The collection $\{d_i\}$ gives a cell decomposition of $M^n$ which is dual to some triangulation.
   \item The resulting cell complex $\mathcal{D}$  is \textit{regular}, that is, each cell is patched by an injective map.
   
   In simple words, this means that no cell $d_i$ patches to itself.
 \end{enumerate}
 
 The cutting hypersurfaces  {are} called \textit{thin walls} and  {are} depicted by thin lines in figures.
    
     The vertices of the cell complex $\mathcal{D}$ that are the vertices $F$ are called \textit{essential vertices}. The other vertices of $\mathcal{D}$ are called \textit{non-essential}, see Fig. \ref{BoldAndThin}, top.
     
     \medskip
    
    Now let $G\subset \Gamma \cong \pi_1(M^n)$ be some finite set, and let $Q=Q_G$ be the union of fundamental domains:
    $$\mathcal{U} \supset Q=Q_G= \bigcup_{g \in G} gF.$$

    Figure \ref{BoldAndThin}, bottom, shows some $Q_G$ for the   torus. 
    
    For the time being we fix some $G$ and omit the subindex $G$ for brevity.
    
    The cellular structure $\mathcal{D}$ on $F$ gives rise to a cellular structure $\mathcal{D}(Q_G)$ of $Q_G$.
 Denote by  $C$ be the number of all $n$-cells in $\mathcal{D}$.

    Fix a point $a\in Q $, choose $C$ generic points  {(these points will correspond  to colors)} $\mathbf{p}_1,...,\mathbf{p}_C\in \pi^{-1}(a)$ and consider $2C$ flat quasisections
    
    $$\mathcal{Q}_i^+=\mathcal{Q}(Q,a, \mathbf{p}_i)  \hbox{  and } \mathcal{Q}_i^-=\mathcal{Q}(Q,a,- \mathbf{p}_i).$$

    \begin{figure}[h]
 \includegraphics[width=0.5\linewidth]{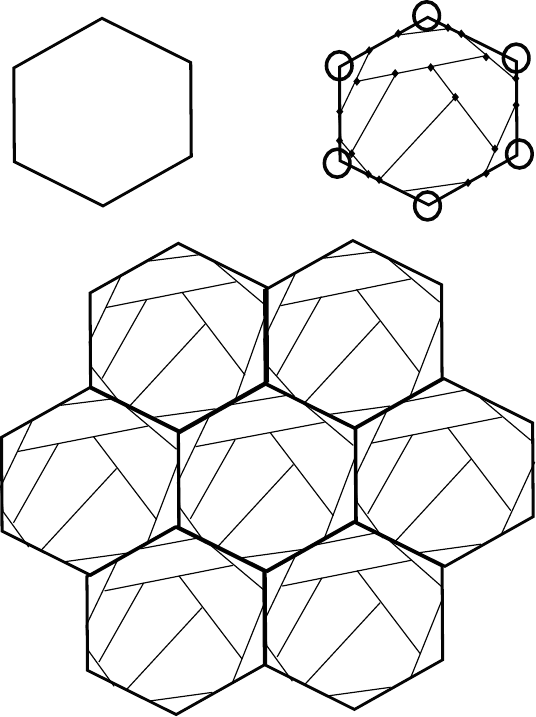}
  \caption {Let $M^n=S^1\times S^1$ be the torus. (Top:)   the fundamental domain $F$ is the hexagon. We cut the hexagon into eleven $2$-cells (so here $C=11$), and depict the   cellular structure.
  Essential vertices are marked by small circles, whereas non-essential vertices are marked by bold dots. In this case $\nu=2$, $\lambda =3$. (Bottom:) In this particular case $Q$ is equal to the union of seven fundamental domains. }
  \label{BoldAndThin}
\end{figure}


\subsection{Randomization}

We are going to construct a random partial section~$s$ defined outside the set of vertices of the cell complex~${\mathcal{D}}$. 
The Euler number is equal to the expectation:
	
	\begin{equation}\label{exp}
	  \mathcal{E}(E\rightarrow M^n)=\mathbb{E}\Big(\sum_i \ind_s(v_i)\Big)= \sum_i \mathbb{E}(\ind_s(v_i)).
	\end{equation}

Here the summation runs over the set of vertices of the complex $\mathcal{D}$.
	Each vertex $v_i$ contributes to $\mathcal{E}$ some \textit{weight} equal to  $\mathbb{E}(\ind_s(v_i)$. The weight will be evaluated.
 
The probability space  {is} a finite one, so taking the expectation  { amounts} to averaging.

 \bigskip

 
   Before we explain the details, let us give a definition.

\begin{definition}
A  \textit{labeled spherical configuration} of~$N$ points is an injective map  from some~$N$-element set to the standard sphere~$S^{n-1}\subset \mathbb{R}^n$.   Sometimes we  record  a configuration as~$(p_1,...,p_N)$ assuming that the $N$-element set is $\{1,...,N\}$, and~$p_i\in S^{n-1}$.

\end{definition}

\textbf{
Step 1.}

Take all the colorings of the $n$-cells of $\mathcal{D}$ into $C$ colors such that different~$n$-cells $d_m$ are colored by different colors (so each of the colors 
is used exactly once). These colorings are called \textit{good}.  Choose a  \textit{random good coloring}, that is, fix a good coloring with equal probability~$1/C!$.

\medskip

\textbf{Step 2}

 Observe that over each of the $n$-cells~$d_m$ the quasisection~$\mathcal{Q}_i^{\pm}$ yields  a finite  covering. For each of the cells $d_m$ choose the $sign$ which is either ``$+$'' or ``$-$'' with probability $1/2$. Next, 
 { choose  a sheet of the covering over ~$d_m$} coming from the quasisection~$\mathcal{Q}^{sign}_i$ whose index $i$ agrees with the color of the cell: {if the cell colored in the $i$-th color, we choose a sheet of $\mathcal{Q}_i^{sign}$.}
 
 We make our random choice
 with equal probability and independently for all the $n$-cells~$d_m$. 
 
\medskip

\textbf{Step 3}

   Next,  we extend each random partial section continuously everywhere except for the set of vertices of $\mathcal{D}$.

 For the extension, we use Sullivan's construction, which we call \textit{the geodesic filling}. Here is a quotation from~\cite{Sullivan}:
    ``For each codimension~$1$ cell choose a point and in the sphere above this point construct an arc between the two points  determined by the previous choices in the two adjacent top cells. Now spread these arcs over the codimension~$1$ cells to partially fill in the discontinuity of our preliminary cross section. Now over a point in a codimension~$2$ cell we find the boundary of a triangle. We fill in this boundary with a triangle as before and
    proceed this way down to the zero dimensional cells... If we inductively choose geodesic arcs, geodesic triangles, geodesic tetrahedra, etc., we would have for each zero cell~$v$ (that is, for each vertex) constructed a map of the boundary of an~$n$-simplex~$\Delta^n$  into~$S^{n-1}$ where each face is carried into 
    a geodesic simplex.''
    This construction works correctly for generic point configurations only. For instance, two antipodal points in a configuration would create a problem since the shortest connecting arc is ill-defined for them.
    
    However, the generic choice of the starting points $\mathbf{p}_1,...,\mathbf{p}_C$ of the quasisections guarantees that in our construction geodesic filling works correctly.
    
    So eventually we arrive at a random  section defined everywhere except for the set of vertices of the complex $\mathcal{D}$.

 \subsection{Evaluation of
~$\mathbb{E}(\ind_s(v))$. Two key lemmas} 
  
  \medskip
  
  A simple fact is:
  \begin{claim}\label{LemmaConv}
     The geodesic filling  associated with a configuration~$(p_1,...,p_{n+1})\subset S^{n-1}\subset \mathbb{R}^n$ induces a map~$\partial \Delta^n \rightarrow S^{n-1}$. The degree of the map  (the index of the configuration) is equal to~$\pm 1$ iff the origin~$O$ belongs to the convex hull~$Conv(p_1,...,p_{n+1})$.  Otherwise the index vanishes. \qed
  \end{claim}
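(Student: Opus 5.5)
The plan is to set up the geodesic filling explicitly and then compute its degree by counting preimages of a single regular value. Identify $\partial\Delta^n$ with the union of its facets $\sigma_j=\mathrm{Conv}(e_i: i\neq j)$. The geodesic filling sends the face spanned by $\{e_i\}_{i\in I}$ to the geodesic simplex spanned by $\{p_i\}_{i\in I}$. For a generic configuration, every $n$ of the points are linearly independent, so each such set lies in an open hemisphere. A convenient model of the filling on $\sigma_j$ is the radial projection
$$x=\sum_{i\neq j} t_ie_i\ \longmapsto\ \frac{\sum_{i\ne j}t_ip_i}{\bigl|\sum_{i\ne j}t_ip_i\bigr|}.$$
The denominator never vanishes by linear independence, and the maps on adjacent facets agree on their common faces. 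The resulting map $f:\partial\Delta^n\to S^{n-1}$ is the radial projection of the piecewise linear map $L:\partial\Delta^n\to\mathbb{R}^n\setminus\{O\}$, $e_i\mapsto p_i$. The paper's inductively built geodesic simplices are homotopic to this model through maps that avoid degeneracy, because each image face stays in an open hemisphere. So the degree is unchanged, and it suffices to compute $\deg f$.

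Next, interpret the degree geometrically. The degree of $f$ equals the winding number of the cycle $L(\partial\Delta^n)$ around $O$, which in turn equals the signed count of intersections of the boundary image with a generic ray from $O$. Fix a generic direction $u$. A point of $\sigma_j$ maps to $u$ exactly when the ray $\mathbb{R}_{>0}u$ meets the cone $\mathrm{Cone}(p_i:i\neq j)$. By linear independence, that cone is simplicial and injective, so each facet contributes $0$ or $\pm1$, the sign being determined by the orientation of $(p_i)_{i\neq j}$ relative to the induced orientation of $\sigma_j$.

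Now split into two cases. If $O\notin\mathrm{Conv}(p_1,\dots,p_{n+1})$, a separating hyperplane puts all $p_i$ in an open half-space $\{\langle x,w\rangle>0\}$. Then $f$ misses $-w$, so it is not surjective and has degree $0$. If $O\in\mathrm{Conv}$, genericity puts $O$ in the interior, so $\sum\lambda_ip_i=0$ with all $\lambda_i>0$. In this case the cones $\mathrm{Cone}(p_i:i\neq j)$, $j=1,\dots,n+1$, tile $\mathbb{R}^n$: every $v$ can be written as $\sum\mu_ip_i$, and the representation is unique modulo $\lambda$. Shifting by the multiple of $\lambda$ that makes the minimal coefficient zero places $v$ in exactly one cone, generically. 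Hence the ray $\mathbb{R}_{>0}u$ meets exactly one facet-cone, $f^{-1}(u)$ is a single point, and $\deg f=\pm1$.

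The main obstacle is the first step: justifying that the geodesic filling is well-defined and agrees up to homotopy with the radial-projection model, and that genericity excludes degenerate cases, namely antipodal points, dependent $n$-subsets, and $O$ on the boundary of the convex hull. Once this model is in place, the counting in the two cases is elementary.
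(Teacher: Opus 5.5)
Your proof is correct. The paper gives no argument for this claim: it is stated as a ``simple fact'' without proof, so there is no route in the text to compare against, and your write-up supplies the missing proof.

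Two points in it are worth keeping, because the paper leaves them implicit.

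\textbf{Genericity.} You make precise the genericity the claim silently needs: every $n$ of the $p_i$ are linearly independent. This condition does three jobs.
\begin{enumerate}
\item It puts each facet image in an open hemisphere. This makes the geodesic filling well defined, and homotopic to the radial projection $\rho\circ L$ by the normalized straight-line homotopy, since the two values at a point are never antipodal.
\item It excludes $O$ lying on the boundary of $\mathrm{Conv}(p_1,\dots,p_{n+1})$.
\item Without it the statement can fail to make sense. For instance, when $n=3$ and three of the points are equally spaced on a great circle, there is no well-defined geodesic triangle.
\end{enumerate}

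\textbf{Case analysis.} Both cases are sound. The separating-hyperplane argument gives non-surjectivity, hence degree $0$. The one-dimensional space of relations, spanned by $\sum\lambda_i p_i=0$ with all $\lambda_i>0$, does give a tiling of $\mathbb{R}^n$ by the $n+1$ simplicial cones. Hence a generic direction $u$ has exactly one transverse preimage.

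\textbf{A shorter finish.} Once the model $f=\rho\circ L$ is in place, note that $L$ extends affinely to $\bar L:\Delta^n\to\mathbb{R}^n$ with image $\mathrm{Conv}(p_1,\dots,p_{n+1})$.
\begin{enumerate}
\item If $O\notin\mathrm{Conv}$, then $\rho\circ\bar L$ extends $f$ over the ball, so $\deg f=0$.
\item If $O$ is an interior point, $\bar L$ is an affine isomorphism onto a nondegenerate simplex. Radial projection from an interior point of a convex body maps its boundary homeomorphically onto $S^{n-1}$, so $\deg f=\pm1$.
\end{enumerate}
This is the same winding-number computation as your ray count, with the bookkeeping absorbed into the affine extension.
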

  
  \bigskip

 For a vertex $v$ of~$\mathcal{D}$, take its small neighborhood $U_v$ and consider the set $$\pi^{-1}(U_v) \cap \bigcup \mathcal{Q}_i^\pm .$$ It is a union of connected components which we call \textit{sheets over $v$}. The sheets that project surjectively to $U_v$ are called \textit{regular}. 
 
 For example, the top sheet in Figure \ref{sheets}  is regular, and the other two sheets are not.
 
 Our construction implies  directly:
 
 \begin{claim}
   Let $v$ be a non-essential vertex of $M^n$. Assume that $d_1$ and $d_2$ are two $n$-cells of $\mathcal{D}$ adjacent to $v$ such that $d_1$ and $d_2$ share a $(n-1)$-cell coming from a thin wall.  Then for every sheet $r$ over $v$, 
   the projection $\pi(r)$ intersects the interior of $d_1$ if and only if  $\pi(r)$ intersects the interior of $d_2$, see Fig. \ref{sheets}.\qed
 \end{claim}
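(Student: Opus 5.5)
The plan is to argue directly from how the flat quasisections are built. The point is that a thin wall lies in the interior of a single copy $gF\subset Q$, whereas bold walls are where distinct copies of $F$ meet.

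\textbf{Step 1: lift to $\mathcal{U}$.} Since $v$ is non-essential, choose the neighborhood $U_v$ small enough that it is evenly covered by $pr$. Every sheet $r$ over $v$ is a connected component of $\pi^{-1}(U_v)\cap q_i^{\pm}(Q)$ for some $i$ and some sign. By construction of $q$, it is the image under $q_i^{\pm}$ of a connected component $\widetilde r$ of $Q\cap \widetilde U$, where $\widetilde U$ is one lift of $U_v$. The only exception is overlapping images, and that case is harmless: the image lies in a single leaf, and a leaf restricted to $\pi^{-1}(U_v)$ is a disjoint union of graphs over $U_v$, so overlapping pieces lie in the same local graph. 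Hence $\pi(r)=pr(\widetilde r)$, and it suffices to prove the statement for the set $\widetilde r\subset \widetilde U$.

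\textbf{Step 2: local structure near the lifted vertex.} Let $\widetilde v\in\widetilde U$ be the lift of $v$, and let $\widetilde d_1,\widetilde d_2$ be the lifts of the cells $d_1,d_2$ adjacent to $\widetilde v$. Because the shared $(n-1)$-cell comes from a thin wall, and thin walls are cuts made inside a single fundamental domain, both $\widetilde d_1$ and $\widetilde d_2$ lie in one and the same copy $gF$. Inside $\widetilde U$, the set $Q\cap\widetilde U$ is a union of pieces $hF\cap\widetilde U$ with $h\in G$, and each such piece is a union of closures of the cells of $\mathcal{D}$ lifted near $\widetilde v$.

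\textbf{Step 3: conclude.} If $\widetilde r$ meets $\operatorname{Int}\widetilde d_1$, then some $hF$ with $h\in G$ contains $\widetilde d_1$. Since copies of $F$ have disjoint interiors, $h=g$. Then $gF\subset Q$ also contains $\widetilde d_2$, and $\widetilde d_1\cup\widetilde d_2$ is connected through the interior of their common thin facet, so $\widetilde d_2\cap\widetilde U$ lies in the same component $\widetilde r$. The converse direction is symmetric.

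\textbf{Main obstacle.} The delicate point is Step 1: justifying that $\pi(r)$ coincides with the projection of one connected piece of $Q\cap\widetilde U$ even when the images $q_i^\pm(Q)$ overlap, that is, when different lifts give the same sheet. I would handle this through the local product structure of the foliation, as indicated above: overlapping contributions only enlarge $\pi(r)$ by unions of whole sets of the form $pr(\widetilde r')$. Each of these already satisfies the equivalence, so the union satisfies it as well.
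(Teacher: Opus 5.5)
Your proposal is correct and follows the same route as the paper. The paper only says the claim follows directly from the construction: thin walls lie in the interior of a single copy $gF$, and $Q$ is a union of whole copies of $F$, so a piece of $Q$ near a lift of $v$ contains both adjacent cells or neither. You make this explicit, and your handling of overlapping images (a sheet is a union of pieces $pr(\widetilde r')$, each of which satisfies the equivalence) is a valid detail the paper leaves unstated.
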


\begin{figure}[h]
 \includegraphics[width=0.5\linewidth]{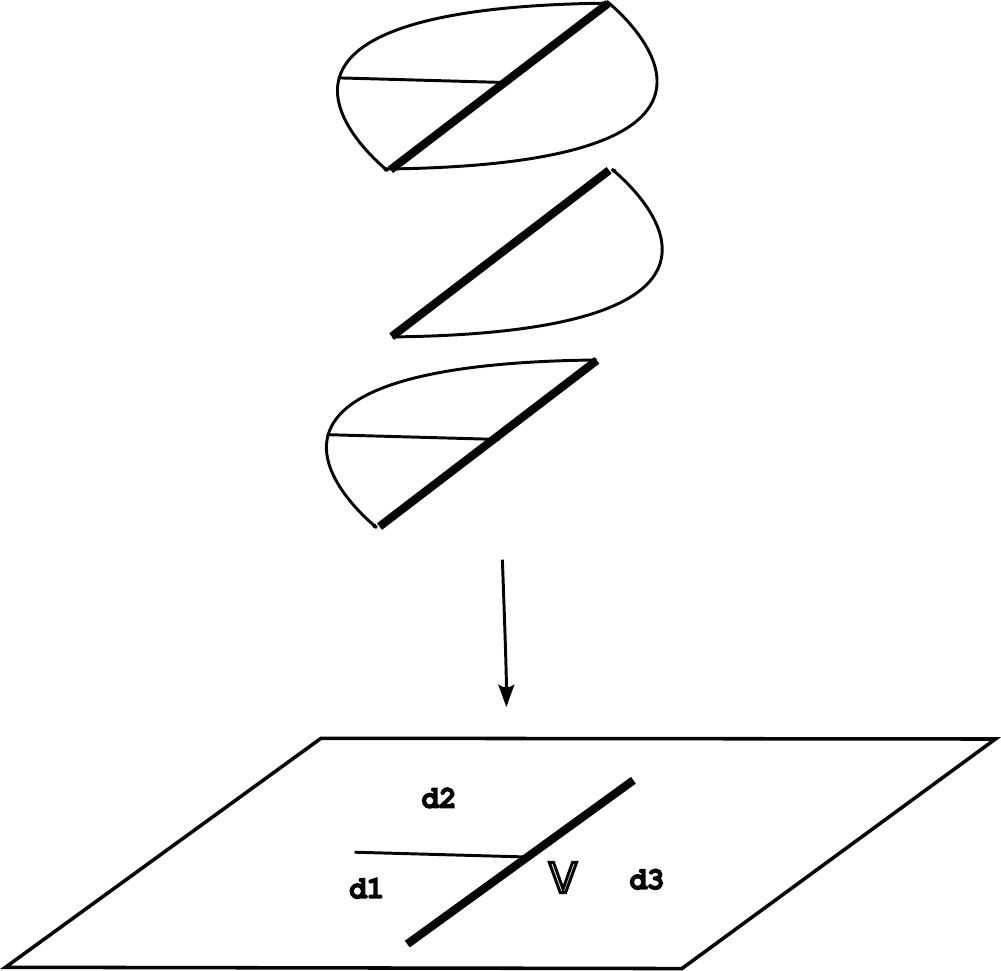}
  \caption {This is the preimage of a neighborhood of a non-essential vertex. Here we have three sheets, the top one is regular.}
  \label{sheets}
\end{figure}

Now come the key lemmas:

  \begin{lemma}\label{LemmaNonEss}
   {The weight $\mathbb{E}(\ind_s(v))$ of a non-essential vertex is zero.}
  \end{lemma}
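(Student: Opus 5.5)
The plan is to produce a measure-preserving involution on the finite probability space of Steps 1--3 that reverses the sign of $\ind_s(v)$. Then the values $\ind_s(v)$ cancel in pairs and $\mathbb{E}(\ind_s(v))=0$. The involution exchanges all random data attached to two suitably chosen $n$-cells adjacent to $v$.

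\emph{Step 1: finding two cells separated by a thin wall.} Since $\mathcal{D}$ is dual to a triangulation and is regular, a small neighborhood $U_v$ meets exactly $n+1$ distinct $n$-cells $d_1,\dots,d_{n+1}$, and every two of them share an $(n-1)$-cell at $v$. Lift $v$ to a point $\tilde v\in\mathcal{U}$. Because $v$ is non-essential, $Deg(\tilde v)\leq n$, so the $n+1$ lifted cells around $\tilde v$ lie in at most $n$ copies $gF$. By pigeonhole, two of them, say $d_1$ and $d_2$, lie in the same copy $gF$. Their common $(n-1)$-face is then interior to $gF$, hence lies on a thin wall, so the preceding Claim applies to $d_1,d_2$.

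\emph{Step 2: matching sheets over $d_1$ and $d_2$.} For each color $i$ and sign, the sheets of $\mathcal{Q}_i^{\pm}$ over $d_1$ correspond to the copies of $d_1$ in $Q=\bigcup_{g\in G}gF$, one for each $g\in G$; the same holds for $d_2$. Matching the copy of $d_1$ in $gF$ with the copy of $d_2$ in the same $gF$ gives a bijection between sheets over $d_1$ and sheets over $d_2$. Near the relevant lift of $v$, matched sheets are pieces of one connected sheet $r$ over $v$, as in the Claim. Hence $r$ is a single piece of one leaf over $d_1\cup d_2$, and its values on the two cells near $v$ coincide up to the continuous variation used in the geodesic filling.

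\emph{Step 3: the involution $\sigma$.} Define $\sigma$ to swap the colors of $d_1$ and $d_2$ (which keeps the coloring good), swap their signs, and replace the chosen sheet over $d_1$ by the matched sheet over $d_2$ and vice versa. All other choices are left unchanged. Since all choices are uniform and independent, $\sigma$ preserves the probability measure. Under $\sigma$, the configuration $(p_1,p_2,p_3,\dots,p_{n+1})$ seen at $v$ becomes $(p_2,p_1,p_3,\dots,p_{n+1})$. The geodesic filling of $\partial\Delta^n$ with a transposed labeling is the original map precomposed with an orientation-reversing simplicial map of $\partial\Delta^n$. By the Claim on indices of configurations, or directly by this precomposition, $\ind_{\sigma(s)}(v)=-\ind_s(v)$, and therefore $\mathbb{E}(\ind_s(v))=0$.

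The main obstacle is Step 2. One must make precise that the point chosen over $d_1$ from a given sheet equals, at the level of the configuration at $v$, the point chosen over $d_2$ from the matched sheet. One must also check that the sheet bijection is compatible with the uniform choice. This is where the thin-wall hypothesis and the Claim are essential. Along a bold wall the two sides come from different copies $gF$ and $g'F$, and $g'F$ may not lie in $Q$, so no such matching need exist. A secondary point to verify is that genericity of $\mathbf p_1,\dots,\mathbf p_C$ keeps the filling well defined after the swap, which holds because $\sigma$ only permutes points already present in the configuration.
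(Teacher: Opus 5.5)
Your proposal is correct and follows the paper's proof: both use the same measure-preserving perfect matching, which swaps the colors, signs and chosen sheets of two cells $d_1,d_2$ separated by a thin wall, so the point configuration at $v$ is only transposed and the index changes sign. You also justify two things the paper leaves implicit, namely the existence of such a thin wall (via $Deg(\tilde v)\le n$ and pigeonhole) and the explicit bijection between sheets over $d_1$ and over $d_2$.
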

  
  \begin{proof}
  Let~$v$ be a non-essential vertex of the cell complex~${\mathcal{D}}$.  Consider the $n$-cells of~${\mathcal{D}}$ containing $v$. At least two of them are separated by a thin wall.
  Let us enumerate the cells as $d_1,...,d_{n+1}$ assuming that $d_1$ and $d_2$ share a thin wall, see Fig. \ref{sheets}, and also assuming that the order of the cells agrees with the orientation of $M^n$.
  
  We also assume that the sheets of $\mathcal{Q}_i^\pm$ over the vertex $v$ are also somehow enumerated.

  A random partial section  {in a neighborhood of}  $v$  means a choice of some ordered~$n+1$-point spherical configuration, one point from $\pi^{-1}(v)\cap \mathcal{Q}_i^\pm$  over each of $d_1,...,d_{n+1}$. 
  
  For each $d_i$ adjacent to $v$, we record the choice as $r_k^{l,\pm}$ which means that we choose a sheet number $l$ from the flat quasisection $Q_k^\pm$.
  We assume that the numberings of the sheets $\mathcal{Q}_k^-$ and $\mathcal{Q}_k^+$ are consistent: we have $r_k^{l,+}=-r_k^{l,-}$  for all $k,\ l$.
  Next, we list the choices according to the order  of the cells $d_1,...,d_{n+1}$. 
  
  \medskip
  \textbf{Example:} Notation $(r_2^{3,+}, r_1^{2,-},...)$ means that over the cell $d_1$ we choose the third sheet of $\mathcal{Q}_2^+$, over  the cell $d_2$ we choose
  the second sheet of $\mathcal{Q}_1^-$, etc.

  \medskip
  
  Assume that a configuration $(r_k^*,r_m^*,*)$ arises (with some probability) if $d_1$ is colored in the color $k$, and $d_2$ is colored in the color $m$.
  
 Then the configuration obtained by interchanging the first two entries $(r_m^*,r_k^*,*)$\footnote{Here ''$*$'' denotes something which is identical for the two tuples.} arises (with the same probability) if $d_1$ is colored in the color $m$, and $d_2$ is colored in the color $k$.
 Let us  match $(r_k^*,r_m^*,*)$ with  $(r_m^*,r_k^*,*)$.
  
  \medskip
 \textbf{Example:} $(r_k^{7,+},r_m^{1,-},*)$ is matched with  $(r_m^{1,-},r_k^{7,+},*)$.
  
  \medskip

This is a perfect matching,  and the weights of matched pairs (that is, their contributions to
   $\mathbb{E}(\ind_s(v_i))$)
  sum up to zero. Indeed, we have one and the same point configuration in the sphere. The difference is in the order only. So geodesic fillings contribute indices  with different signs.
  
  \end{proof}

    Let $\mathcal{N}=\mathcal{N}_G$ be the number of $\{g\in G: gF\subset \Int Q_G\}$, and
    
 $\mathcal{N}^\partial=\mathcal{N}_G$ be the number of $\{g\in G: gF \hbox{  intersects } \partial Q_G\}$.
 
 Roughly speaking,  $\mathcal{N}$ is the number of ``interior''  fundamental domains  in $Q_G$, whereas
  $\mathcal{N}^\partial$ is the number of ``boundary'' domains.
 For example, in Fig.\ref{BoldAndThin}, we have  $\mathcal{N}=1,$ and $\mathcal{N}^\partial=6.$

   \begin{lemma}\label{LemmaEss}
   {The absolute value of the weight of an essential vertex is bounded from above: }
     $$|\mathbb{E}(\ind_s(v_i))|\leq \frac{(\mathcal{N}^\partial)^{n+1} +(n+1)\cdot \mathcal{N} \cdot (\mathcal{N}^\partial)^{n}}{2^n \cdot \Big(\mathcal{N}^\partial + \mathcal{N}\Big)^{n+1}}.$$
  \end{lemma}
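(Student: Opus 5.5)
Fix an essential vertex $v$ of $M^n$. Its $n+1$ adjacent $n$-cells $d_1,\dots,d_{n+1}$ (ordered consistently with the orientation) lie in $n+1$ different copies of $F$ and are pairwise separated by bold walls.

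\textbf{Step 1: the sheets over $v$.} Since $F$ is simple, a lift of $v$ to $\mathcal{U}$ is a point $y$ with $Deg(y)=n+1$, and each lift of $v$ lying in $Q_G$ comes from some $g\in G$. Each sheet over $v$ is the part of a flat quasisection over $U_y\cap Q_G$ for a lift $y$. It is regular if all $n+1$ domains meeting at $y$ belong to $Q_G$. Otherwise it covers only those cells $d_j$ whose domain lies in $Q_G$.

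For a fixed cell $d_j$, the sheets of $\mathcal{Q}_i^{\pm}$ over $d_j$ near $v$ are in bijection with the copies $gF$, $g\in G$, containing the corresponding corner. So there are exactly $\mathcal{N}+\mathcal{N}^\partial$ of them, one per element of $G$. A sheet is regular only if the copy $gF$ used for $d_j$ is interior or, more generally, has all neighbours at $y$ inside $Q_G$. In particular, if the copy is an interior domain ($gF\subset \Int Q_G$), the sheet is automatically regular.

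\textbf{Step 2: condition on the choices.} Let $\sigma=(\sigma_1,\dots,\sigma_{n+1})$ be the choice of which domain $g_jF$ supplies the sheet over $d_j$. These choices are independent and uniform over $\mathcal{N}+\mathcal{N}^\partial$ options. Split the expectation according to how many $\sigma_j$ are interior domains.

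\emph{Case of two or more interior choices.} Say the sheets over $d_1,d_2$ come from interior domains, hence are regular. I would run the matching argument of Lemma~\ref{LemmaNonEss}. Swap the colors of $d_1$ and $d_2$, and simultaneously swap the sheets: a regular sheet of $\mathcal{Q}_k$ restricted over $d_2$ corresponds to the same regular sheet of $\mathcal{Q}_k$ over $d_1$. This produces the same point configuration with the first two entries transposed, with equal probability, so the index changes sign. Hence the total contribution of this case is $0$.

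The subtlety is that swapping must be a bijection on outcomes. This should hold because regular sheets at $v$ correspond bijectively, via the lift $y$, to elements of $G$ whose whole star at $y$ lies in $Q_G$. I expect this bookkeeping to be the main obstacle. One has to define the matching so that it pairs outcomes (sheet choices over $d_1$ and $d_2$ both regular) without double counting when more than two are interior. The fix is to always swap the two lowest-indexed cells carrying interior-domain sheets.

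\textbf{Step 3: the remaining cases.} What remains are outcomes with zero interior choices, of probability $(\mathcal{N}^\partial)^{n+1}/(\mathcal{N}+\mathcal{N}^\partial)^{n+1}$, and outcomes with exactly one, of probability at most $(n+1)\mathcal{N}(\mathcal{N}^\partial)^n/(\mathcal{N}+\mathcal{N}^\partial)^{n+1}$.

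For these I would use the Smillie averaging via the independent signs. By Claim~\ref{LemmaConv}, the index of a configuration $(\pm r_1,\dots,\pm r_{n+1})$ is $\pm1$ iff $O\in Conv$, and otherwise $0$. Among the $2^{n+1}$ sign patterns for a generic configuration, exactly two (one pattern and its negative) contain $O$ in the convex hull: $O$ lies in the convex hull iff the linear relation $\sum c_i r_i=0$ has coefficients all of one sign. So the conditional expectation of $|\ind|$ is at most $2/2^{n+1}=1/2^n$.

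Multiplying this by the probabilities above and adding gives the stated bound.
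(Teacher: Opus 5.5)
Your Step 2 cancellation does not work, because the swap does not preserve the set of outcomes with at least two interior-domain choices.

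Let $\tilde y_1,\dots,\tilde y_{n+1}$ be the vertices of $F$ lying over $v$. Suppose the old choice over $d_b$ is the corner of $g_bF$ at the lift $y=g_b\tilde y_b$. Then your new choice over $d_a$ is the corner at $y$ of $g_bhF$, where $h\tilde y_a=\tilde y_b$ and $h\neq e$. The hypothesis $g_bF\subset\Int Q_G$ only gives $y\in\Int Q_G$, so the sheet at $y$ is regular. It says nothing about whether $g_bhF\subset\Int Q_G$.

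A concrete case: take the hexagonal torus of Fig.~\ref{BoldAndThin} with $Q$ the seven hexagons. The outcome in which $d_1,d_2$ are both supplied by the central hexagon (the only interior one), at two different corners, is sent to an outcome in which $d_1,d_2$ are supplied by two boundary hexagons. The image has at most one interior choice. So your map is not an involution of the case you want to discard, and always swapping the lowest-indexed pair does not repair this. The claim that this case contributes $0$ is unproved, and false in general.

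The fix, which is what the paper does, is to match on a swap-invariant notion: swap the first two entries that are \emph{regular} sheets.
\begin{itemize}
\item The swapped choices lie on the same regular sheets, and the earlier entries are untouched.
\item The colours of $d_a,d_b$ differ, so the swap has no fixed points.
\item The configuration is the same with two entries transposed, so the index changes sign.
\end{itemize}
Hence all outcomes with at least two regular entries cancel.

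The complementary event, at most one regular entry, is invariant under sign changes, since regularity depends only on the lift and not on colour or sign. Smillie averaging then gives $|\mathbb{E}(\ind_s(v))|\le 2^{-n}P(\text{at most one regular entry})$.

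Finally, an interior domain always yields a regular sheet, and the domain labels $\sigma_j$ are i.i.d.\ uniform on $G$. So this event is contained in ``at most one interior entry'', whose probability is exactly your fraction. The paper's count implicitly identifies regular with interior and states an equality; it is this inclusion of events that makes the bound correct.
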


 \begin{proof}
 Let~$v$ be an essential vertex of~${\mathcal{D}}$.
 Enumerate  the $n$-cells of $\mathcal{D}$ that contain $v$, assuming that the order agrees with the orientation of $M^n$. 
 
 We use here notation from the proof of Lemma \ref{LemmaNonEss}.
 
  A random partial section over  $v$  means a choice of some ordered~$n+1$-point tuple from a spherical point configuration. 
  
  Some of the points correspond to regular sheets  of  $\pi^{-1}(U_v) \cap \mathcal{Q}_i^{j,\pm}$, see Fig. \ref{sheets}.
   Let us denote these sheets by $r_i^{j,\pm}$.
  
  Some of the points correspond to non-regular sheets. Let us denote these sheets by $a_i^{j,\pm}$.
  
  We have to average over all possible choices.
  
  If a configuration contains at least two points coming from regular sheets, its weight can be canceled: indeed, the
  tuples $(*,r_i^{j,\pm},*,r_k^{l,\pm},*)$  and $(*,r_k^{l,\pm},*,r_i^{j,\pm},*)$  contribute  opposite weights. These tuples appear for different colorings, but with equal probabilities. More precisely, we match such two tuples iff $,r_i^{j,\pm}$ and $r_k^{l,\pm}$ are the first two entries of type $r$ that appear in this (ordered) tuple.

  \medskip
 \textbf{ Example:} The tuple $(a_5^{7,+},r_2^{1,-},r_3^{2,+},r_7^{3,+})$ is matched with $(a_5^{7,+},r_3^{2,+},r_2^{1,-},r_7^{3,+})$. These tuples appear for different colorings, but with equal probabilities.
 \medskip
  
  We conclude that the remaining tuples fall into two types: (A) those not containing entries of type $r_i^{j,\pm}$, and (B) those containing exactly one entry of type $r_i^{j,\pm}$.
  
  Next we apply Smillie's averaging. We group the tuples that differ by the upper indices $\pm$ only. Each of the groups contains $2^{n+1}$ tuples, but exactly two of them contribute a non-zero index which is either $1$ or $-1$.

 \medskip
 
 For an essential vertex, the number of all choices of sheets that are not matched, and therefore, are not canceled, is equal to
 
 $$2^{n+1}\cdot C\cdot(C-1)\cdot ...(C-n)\cdot\Big((\mathcal{N}^\partial)^{n+1} +(n+1)\cdot \mathcal{N} \cdot (\mathcal{N}^\partial)^{n}\Big). $$

 Remind that they are grouped into groups of $2^{n+1}$, and each group contributes either $2$ or $-2$.
 
  {So the number of choices with non-zero contribution is }
 
   {$$2\cdot C\cdot(C-1)\cdot ...(C-n)\cdot\Big((\mathcal{N}^\partial)^{n+1} +(n+1)\cdot \mathcal{N} \cdot (\mathcal{N}^\partial)^{n}\Big). $$}
 
 The number of all possible choices is  equal to
 
 $$2^{n+1}\cdot C\cdot(C-1)\cdot ...(C-n)\cdot\Big(\mathcal{N}^\partial + \mathcal{N}\Big)^{n+1} . $$

 {The proof is completed by dividing these two expressions.}

  \end{proof}

 \section{Proof of Theorem \ref{ThmMainGeneralCase}}

 Due to Lemma \ref{LemmaEss}, it is sufficient to prove that with a smart choice of $G$, each essential vertex contributes at most $\frac{1}{2^n}\cdot\left( 1-\frac{n(n+1)}{2}\cdot \frac{(2\lambda-1)^{n-1}}{(2\lambda)^{n+1}}\right)$.

    Now let $Cayley(\mathcal{G})$  be the Cayley graph of the group $\Gamma$ associated to the set of generators $\mathcal{G}$. Set $G=B_R$ be the ball of radius  $R$ in $Cayley(\mathcal{G})$, and set $$\mathcal{U} \supset Q=Q_R= \bigcup_{g \in B_R} gF.$$ 
    
As before, let $\mathcal{N}=\mathcal{N}(R)$ be the number of $\{g\in B_R: gF\subset \Int Q_R\}$, and
 
 $\mathcal{N}^\partial=\mathcal{N}(R)$ be the number of $\{g\in B_R: gF \hbox{  intersects } \partial Q_R\}$.
 
 So, $\mathcal{N}^\partial +\mathcal{N}$ is equal to $|B_R|$, or, equivalently, to the number of copies of $F$ contained in $Q_R$.

 \medskip

    Let us estimate the expression from Lemma \ref{LemmaEss} in terms of $\lambda$ and $n$. Set $\mathbf{x} = \dfrac{\mathcal{N}^\partial}{\mathcal{N}}$. Then $$ \mathbb{E}(\ind_s(v_i)) \leq \dfrac{(\mathcal{N}^\partial)^{n+1} +(n+1)\cdot \mathcal{N} \cdot (\mathcal{N}^\partial)^{n}}{\left(\mathcal{N}^\partial + \mathcal{N}\right)^{n+1}} = \dfrac{\mathbf{x}^{n+1} + (n+1)\mathbf{x}^n}{(1+\mathbf{x})^{n+1}}. $$ 

It remains to estimate $\mathbf{x}$.

\begin{lemma}\label{LemmaInterior} Let $F$ be a simple fundamental domain. Then $$F\subset \Int \bigcup_{g \in B_1} gF,$$ where $B_1$ is the ball of radius~$1$ in the  graph $Cayley(\mathcal{G})$.

\end{lemma}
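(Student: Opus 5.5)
We need to show that every point $y\in F$ has a neighborhood contained in $\bigcup_{g\in B_1} gF$. Since $\Gamma$ acts properly discontinuously and $F$ is compact (it projects onto the closed manifold $M^n$ and has piecewise smooth boundary), the tiling $\{gF\}$ is locally finite. So each $y\in\mathcal{U}$ has a neighborhood $V_y$ that meets only the finitely many tiles containing $y$, and by the tiling property $V_y\subset\bigcup_{g:\,y\in gF} gF$. It therefore suffices to prove the following: \emph{if $y\in F\cap gF$, then $g\in B_1$}, that is, $g=\mathrm{id}$ or $g\in\mathcal{G}$, which means $\dim(F\cap gF)=n-1$.

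For interior points of $F$ this is trivial. For $y\in\partial F$, I would use simplicity. The tiles through $y$ are $g_0F=F, g_1F,\dots,g_kF$ with $k+1=Deg(y)\le n+1$. The stratum of points of degree exactly $k+1$ near $y$ has dimension at most $n-k$; for a generic simple configuration (like a simple polytope), this locally looks like the corner of $k+1$ regions meeting as the coordinate orthants cut by $k$ hyperplanes. The key claim is that in such a local picture every two of the $k+1$ tiles through $y$ share an $(n-1)$-dimensional face near $y$. Equivalently, the link of the stratum through $y$ is a $(k-1)$-sphere tiled by $k+1$ cells, and these form the dual of the boundary of a $k$-simplex, in which every pair of cells is adjacent. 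Given this, $F\cap g_iF$ contains a facet for every $i$, so $g_i\in\mathcal{G}$ and $y\in\Int\bigcup_{g\in B_1}gF$.

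The main obstacle is justifying this local ``simple polytope'' structure from the dimension condition alone, i.e. ruling out configurations in which two tiles meet only along a lower-dimensional set. Here is the argument I would try first. Suppose $F$ and $g_iF$ meet near $y$ only in a set of dimension $\le n-2$. Take a small sphere $\Sigma$ around $y$ transverse to the strata. On $\Sigma$, the tiles give a cover by closed sets, $\Sigma$ is an $(n-1)$-sphere, and points of $\Sigma$ lying in $m$ tiles form a set of dimension at most $n-m$ by the simplicity condition. So at most $n$ tiles meet at any point of $\Sigma$. A nerve/Lebesgue-covering-dimension argument then shows that the nerve of this cover must carry the fundamental class of $S^{n-1}$, hence is a full $(n-1)$-dimensional sphere. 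If two tiles were non-adjacent in codimension one, the adjacency graph structure could not support this. I would proceed by induction on $k$, restricting to the strata of degree $k$ adjacent to $y$: by the induction hypothesis, tiles meeting along those strata are pairwise facet-adjacent, and every pair $\{g_i,g_j\}$ appears together in some lower-degree stratum near $y$, since otherwise the degree-$(k+1)$ stratum would have too large dimension. If the induction becomes messy, I would fall back on stating that for a generic (simple) domain the local structure is that of a simple polytope corner, as in the cut-locus example, and derive pairwise adjacency from that.
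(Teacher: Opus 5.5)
\emph{Verdict: genuine gap.} Your route is the paper's, but the step you add to justify its key claim fails, and no purely local argument of that kind can work.

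The paper's proof asserts that simplicity forces every tile through a point $v\in F$ to share a codimension-one face with $F$. It then concludes ``hence $v\in\Int\bigcup_{g\in B_1}gF$ by the definition of $\mathcal{G}$''. Your local-finiteness reduction is a correct way of making that ``hence'' precise.

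The problem is your justification of the first assertion. The dimension bounds $\dim\{Deg\geq k\}\leq n-k+1$ (the intended form of the definition) do not force pairwise codimension-one adjacency near $y$. Concretely, in $\mathbb{R}^3$ with $r=\sqrt{x_1^2+x_2^2}$, take
\[
A=\{x_3\geq r\},\quad B=\{x_3\leq -r\},\quad C=\{|x_3|\leq r,\ x_1\geq 0\},\quad D=\{|x_3|\leq r,\ x_1\leq 0\}.
\]
\begin{itemize}
\item Each piece is a cone over a closed disc in the link $S^2$.
\item The origin is the only point of degree $4$. The degree-$3$ points form four rays $\{x_1=0,\ x_3=\pm|x_2|\neq 0\}$, and the degree-$2$ points form $2$-dimensional sectors. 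So every dimension bound for $n=3$ holds.
\item Yet $A\cap B=\{0\}$.
\end{itemize}

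Here is where each of your two arguments breaks.
\begin{itemize}
\item \textbf{Nerve argument.} On the link, $C\cap D$ is two disjoint arcs and $A\cap C\cap D$ is two points, so the nerve lemma does not apply. The nerve is the two triangles $ACD$ and $BCD$ glued along $CD$. That is a disc, not a complex carrying $[S^2]$.
\item \textbf{Induction.} $A$ and $B$ share no lower-degree stratum near $0$, yet the degree-$4$ stratum is a single point. So the inference ``otherwise the top stratum would have too large dimension'' is false.
\end{itemize}
The issue is not only technical. Suppose such a corner occurred with $A=F$, $B=gF$, and $F\cap gF$ of dimension $<n-1$ everywhere. Then the lemma's conclusion would actually fail at $0$: points of $\Int B$ near $0$ lie in no tile indexed by $B_1$.

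So the missing ingredient is a hypothesis, not a cleverer topological argument. You have to read ``simple'' as the local-model condition in your fallback. Near a point of degree $k+1$, the tiles should look like $\mathbb{R}^{n-k}$ times the $k+1$ cones over the facets of a $k$-simplex containing the origin in its interior. Any two such cones share a codimension-one cone, so pairwise adjacency is immediate. This is exactly what the paper uses tacitly; its one-line proof gives no derivation from the dimension count either. Under that reading your proof is complete and coincides with the paper's.
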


\begin{proof} Simplicity of $F$ implies that for every $v\in F$ all the fundamental domains, containing $v$ have a common codimension-one face with $F$. Hence, $v \in \Int \bigcup_{g \in B_1} gF,$ by the definition of $\mathcal{G}$.
\end{proof}

     Lemma \ref{LemmaInterior} implies that  $gF$ is contained  in the interior of $Q_R$ for all $g\in B_{R-1}$. So $\mathcal{N}\geq |B_{R-1}|$, \ \  $\mathcal{N}^{\partial}\leq |B_R| - |B_{R-1}|$ and $$\mathbf{x} = \dfrac{\mathcal{N}^\partial}{\mathcal{N}}\leq \dfrac{|B_R| - |B_{R-1}|}{|B_{R-1}|}.$$ 
    
    \begin{lemma}\label{lemmaNonAm} For $R>2$ it holds that in $Cayley(\mathcal{G})$:
    $$\dfrac{|B_R| - |B_{R-1}|}{|B_{R-1}|} \leq 2\lambda-1.$$ 
    
    \end{lemma}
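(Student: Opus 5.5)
The bound is a counting statement about spheres in the Cayley graph. Set $S_R=B_R\setminus B_{R-1}$, the sphere of radius $R$. The generating set $\mathcal{G}$ satisfies $|\mathcal{G}|\leq 2\lambda$, so the inequality to prove is $|S_R|\leq (2\lambda-1)|B_{R-1}|$. I will compare $S_R$ with the previous sphere $S_{R-1}$ by a geodesic-extension argument, the same one that bounds sphere growth in free groups.

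\textbf{Step 1.} Every $g\in S_R$ with $R\geq 1$ can be written as $g=hs$ with $h\in S_{R-1}$ and $s\in\mathcal{G}$: take a geodesic word for $g$ and delete its last letter. This defines a surjection onto $S_R$ from the set of pairs $(h,s)$ with $h\in S_{R-1}$, $s\in\mathcal{G}$ and $hs\in S_R$.

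\textbf{Step 2.} Fix $h\in S_{R-1}$ with $R-1\geq 1$, and choose a geodesic word for $h$ ending in some letter $t$. Then $ht^{-1}\in S_{R-2}$, so the generator $s=t^{-1}\in\mathcal{G}$ does not carry $h$ into $S_R$. Hence each such $h$ has at most $|\mathcal{G}|-1\leq 2\lambda-1$ extensions into $S_R$. This yields
$$|S_R|\leq (2\lambda-1)\,|S_{R-1}|.$$

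\textbf{Step 3.} Since $S_{R-1}\subset B_{R-1}$, we get $|B_R|-|B_{R-1}|=|S_R|\leq (2\lambda-1)|S_{R-1}|\leq (2\lambda-1)|B_{R-1}|$. Dividing by $|B_{R-1}|>0$ gives the claim. The argument only needs $R\geq 2$, so the hypothesis $R>2$ is more than enough.

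I expect no real obstacle. The one point to handle with care is the use of $\mathcal{G}=\mathcal{G}^{-1}$ in Step 2: the excluded generator $t^{-1}$ must actually lie in $\mathcal{G}$, and it must be counted among the $|\mathcal{G}|$ generators. If $t^{-1}=t$, the exclusion still removes exactly one generator, so the count of at most $|\mathcal{G}|-1$ extensions is unaffected.
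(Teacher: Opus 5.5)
Your proof is correct and is essentially the paper's argument. The paper also notes that each $g\in B_{R-1}\setminus B_{R-2}$ has an ``ancestor'' neighbor in $B_{R-2}$, so it has at most $|\mathcal{G}|-1$ neighbors in $B_R\setminus B_{R-1}$, and then uses $|\mathcal{G}|-1\leq 2\lambda-1$. You additionally make explicit two steps the paper leaves implicit: every element of $S_R$ is adjacent to some element of $S_{R-1}$, which justifies the count bounding $|S_R|$, and $|S_{R-1}|\leq |B_{R-1}|$.
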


    \begin{proof}
    Consider  arbitrary $g\in B_{R-1}\setminus B_{R-2}$. There is at least one edge $[g  g_0] \in Cayley(\mathcal{G})$ such that $g_0 \in B_{R-2}$ --- it is an ``ancestor'' of $g$. 

    Hence, at most $|\mathcal{G}| -1$ edges of $Cayley(\mathcal{G})$ connect $g$ with vertices in $B_R\setminus B_{R-1}$. 
    
    So, $$\dfrac{|B_R| - |B_{R-1}|}{|B_{R-1}|}\leq |\mathcal{G}|-1 \leq 2\lambda-1.$$
    
    \end{proof}
    Finally, by Lemma \ref{lemmaNonAm}, $\mathbf{x}\leq 2\lambda -1$. It remains to note that the function $f(\mathbf{x}) = \dfrac{\mathbf{x}^{n+1} + (n+1)\mathbf{x}^n}{(1+\mathbf{x})^{n+1}}$ increases for positive $\textbf{x}$, therefore $$\mathbb{E}(\ind_s(v_i)) \leq  \dfrac{\mathbf{x}^{n+1} + (n+1)\mathbf{x}^n}{(1+\mathbf{x})^{n+1}}\leq $$$$\dfrac{(2\lambda-1)^{n+1} + (n+1)(2\lambda-1)^n}{(2\lambda)^{n+1}}\leq 1 -\dfrac{n(n+1)}{2}\cdot \dfrac{(2\lambda-1)^{n-1}}{(2\lambda)^{n+1}}.$$

 \section{Proof of Theorem~\ref{ThmMain}}\label{sec:proof}

\begin{definition}
    A discrete group~$\Gamma$ is called \textit{amenable} if it admits a finitely additive left-invariant probability measure.
\end{definition}

We are going to make use of the \textit{F{\o}lner property} of amenable groups:

\begin{definition}
For a discrete group~$\Gamma$ a F{\o}lner sequence is a sequence~$\{\Phi_i\}$ of nonempty finite subsets of~$\Gamma$ such that
$$\dfrac{|g\Phi_i\Delta \Phi_i|}{|\Phi_i|}\rightarrow 0$$ f
or every~$g\in \Gamma$, where $\Delta$ denotes the symmetric difference.
\end{definition}

\begin{lemma} \cite{Folner}
    A group has a F{\o}lner sequence if and only if it is amenable.
\end{lemma}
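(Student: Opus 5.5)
The statement is Følner's classical theorem, cited from \cite{Folner}. Only the direction ``amenable $\Rightarrow$ Følner sequence'' is needed for Theorem~\ref{ThmMain}, so I would prove both directions but spend most of the effort there. We may assume $\Gamma$ is countable, since $\pi_1(M^n)$ is finitely generated. It then suffices to produce, for each finite $S\subset\Gamma$ and $\varepsilon>0$, a finite nonempty $\Phi$ with $|s\Phi\,\Delta\,\Phi|<\varepsilon|\Phi|$ for all $s\in S$. A diagonal argument over an exhausting sequence $S_1\subset S_2\subset\dots$ with $\varepsilon_i\to 0$ then gives the sequence.

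\textbf{Easy direction.} Given a Følner sequence, consider the uniform probability measures $\mu_i(A)=|A\cap\Phi_i|/|\Phi_i|$ on subsets $A\subset\Gamma$. The Følner condition gives $|\mu_i(gA)-\mu_i(A)|\le |g^{-1}\Phi_i\Delta\Phi_i|/|\Phi_i|\to0$. Take a limit along a nonprincipal ultrafilter, or a weak-$*$ cluster point in $[0,1]^{2^\Gamma}$ by Tychonoff. The result is finitely additive, has total mass $1$, and is left-invariant.

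\textbf{Hard direction.} The plan is the standard route invariant mean $\to$ Reiter $\to$ Følner.
\begin{enumerate}
\item An invariant finitely additive probability measure integrates bounded functions and so gives a left-invariant mean $m$ on $\ell^\infty(\Gamma)$.
\item \emph{Approximation.} The finitely supported probability densities $f\in\ell^1(\Gamma)$ are weak-$*$ dense in the means. Hence there is a net $f_\alpha$ with $g f_\alpha - f_\alpha\to0$ weakly in $\ell^1$ for each $g$.
\item \emph{Weak to norm.} Consider the convex set $\{(gf-f)_{g\in S}\}\subset\bigoplus_{s\in S}\ell^1$. Its weak closure contains $0$. By Hahn--Banach (Mazur), weak and norm closures of a convex set agree, so there is a finitely supported probability $f$ with $\|sf-f\|_1<\varepsilon/|S|$ for all $s\in S$.
\item \emph{Namioka's layer-cake trick.} Write $f=\int_0^\infty \mathbf 1_{\{f>t\}}\,dt$. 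Then $\|sf-f\|_1=\int_0^\infty |s\{f>t\}\,\Delta\,\{f>t\}|\,dt$, while $\int_0^\infty|\{f>t\}|\,dt=1$. Summing over $s\in S$ and comparing integrands, some level set $\Phi=\{f>t\}$, which is finite and nonempty, satisfies $\sum_{s\in S}|s\Phi\Delta\Phi|<\varepsilon|\Phi|$.
\end{enumerate}

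The main obstacle is step (3), the passage from weak to norm approximate invariance. It is the only genuinely functional-analytic point, and one must apply Mazur's theorem to the product over the finitely many $s\in S$ simultaneously rather than one $s$ at a time. The layer-cake step (4) is elementary but must be stated carefully to produce a single $t$ that works for all $s\in S$ at once. Summing over $s$ before comparing integrands handles this.
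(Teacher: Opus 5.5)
The paper gives no proof of this lemma. It is quoted from \cite{Folner}, and only the direction ``amenable $\Rightarrow$ F{\o}lner sets'' is used, in Section~\ref{sec:proof}. Your argument is a correct, self-contained proof along the standard functional-analytic route: invariant mean, weak-$*$ density of finitely supported probabilities, Mazur's theorem to pass from weak to norm approximate invariance, then Namioka's layer-cake trick. This is Namioka's later simplification rather than F{\o}lner's original argument. Your easy direction, via a cluster point of the uniform measures on $\Phi_i$, is also fine.

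Compared with the bare citation, your write-up buys two things. First, your countability reduction is not cosmetic. As literally stated, the direction ``amenable $\Rightarrow$ F{\o}lner sequence'' fails for uncountable groups: the finite sets $\Phi_i$ generate a countable subgroup $H$, and for $g\notin H$ one has $g\Phi_i\cap\Phi_i=\emptyset$, hence $|g\Phi_i\,\Delta\,\Phi_i|=2|\Phi_i|$. Your appeal to finite generation of $\pi_1(M^n)$ is exactly the right repair.

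Second, the finite-$S$ statement you reach at step (4), before diagonalizing, is precisely what Section~\ref{sec:proof} consumes (take $S=B_1$). So for the application neither the diagonal argument nor countability is needed. Moreover, since $B_1=B_1^{-1}$, replacing $\Phi$ by $\Phi^{-1}$ gives the right-handed condition $|\Phi s\,\Delta\,\Phi|<\varepsilon|\Phi|$ for $s\in B_1$. This is what Lemma~\ref{LemmaInterior} actually calls for: the neighbours of $hF$ are the domains $hgF$ with $g\in\mathcal G$. Hence one should take $G=\Phi B_1$ rather than the paper's $B_1\Phi$.
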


Assume that~$\Gamma \cong \pi_1(M^n)$ is amenable, and let~$\{\Phi_i\}$ be a F{\o}lner sequence. 

Let~$\varepsilon >0$ be a small number and $B_1$ be a ball in $Cayley(\mathcal{G})$ of radius $1$.
Since~$\{\Phi_i\}$ is a F{\o}lner sequence,  for every~$g\in \Gamma$ there exists~$I \in \mathbb{N}$ such that~$\dfrac{|g\Phi_I\Delta \Phi_I|}{|\Phi_I|}<\varepsilon$. Since~$|B_1|<\infty$,  such number~$I$ can be chosen uniformly for all~$g\in B_1$. Therefore, for this~$I$ we have

$$\dfrac{\left|\left(\bigcup\limits_{g\in B_1}g\Phi_I \right)\setminus \Phi_I\right|}{\left|\Phi_I\right|} = \dfrac{\left|\left(\bigcup\limits_{g\in B_1}g\Phi_I \right)\Delta \Phi_I\right|}{\left|\Phi_I\right|}<\varepsilon \cdot |B_1|.$$

\medskip

Now consider the set $G = \bigcup\limits_{g\in B_1}g\Phi_I \subset \Gamma$ and the corresponding quasisection $Q = \bigcup\limits_{g\in G}gF$.

Let $\mathcal{N}$ and $\mathcal{N}^{\partial}$ be as before.
In terms of the F{\o}lner sequence:
$$\mathcal{N} \geq \left|\Phi_I\right|, \ \ \  \mathcal{N}^\partial\leqslant \left|\left(\bigcup_{g\in B_1}g\Phi_I \right)\setminus \Phi_I\right|.$$

Indeed, by Lemma \ref{LemmaInterior} the domains corresponding to $g\in \Phi_I$ are inner.

Informally, F{\o}lner sets of an amenable group are the sets whose interior is much larger than the boundary. Therefore: 

 \begin{corollary}\label{limit0} 
We have $\mathcal{N}^{\partial}/\mathcal{N}<\varepsilon \cdot |B_1|$. 
 \end{corollary}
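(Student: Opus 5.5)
The corollary should follow by dividing the two bounds displayed just before it. Write $\Phi=\Phi_I$ and $G=\bigcup_{g\in B_1} g\Phi$. Note that $id\in B_1$, so $\Phi\subset G$.

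\textbf{Lower bound on $\mathcal{N}$.} Fix $h\in\Phi$. By Lemma~\ref{LemmaInterior} applied to the translate $hF$, and since deck transformations are homeomorphisms of $\mathcal{U}$,
$$hF\subset \Int\bigcup_{g\in B_1} hgF .$$
The set $\mathcal{G}$ is symmetric, so $B_1=B_1^{-1}$. Hence the step $g\mapsto hg$ should be matched against the set $G=\bigcup_{g\in B_1}g\Phi$, and this is the one point that needs care.

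I would handle it by checking the convention. In $Cayley(\mathcal{G})$ the neighbours of $h$ are $hs$ with $s\in\mathcal{G}$, since $hF$ and $hsF$ share a facet. So the natural set to use is $\bigcup_{s\in B_1}\Phi s$. The F{\o}lner condition can be taken on the side matching this; for countable amenable groups, right F{\o}lner sequences exist as well, obtained by inverting a left one. With the convention aligned so that $hB_1\subset G$ for all $h\in\Phi$, we get $hF\subset \Int Q_G$. Therefore every $h\in\Phi$ is counted in $\mathcal{N}$, and $\mathcal{N}\ge|\Phi|$.

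\textbf{Upper bound on $\mathcal{N}^\partial$.} Every $g\in G$ falls into exactly one of the two classes counted by $\mathcal{N}$ or $\mathcal{N}^\partial$. Indeed, $gF\subset Q_G$, so either $gF\subset\Int Q_G$ or $gF$ meets $\partial Q_G$. Since all of $\Phi$ lies in the interior class, the boundary class is contained in $G\setminus\Phi$. This gives $\mathcal{N}^\partial\le|G\setminus\Phi|$.

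\textbf{Combining.} Using the F{\o}lner estimate derived above,
$$\frac{\mathcal{N}^\partial}{\mathcal{N}}\le\frac{|G\setminus\Phi|}{|\Phi|}<\varepsilon\,|B_1|.$$
To justify that estimate itself, note that
$$G\setminus\Phi\subset\bigcup_{g\in B_1}(g\Phi\setminus\Phi)\subset\bigcup_{g\in B_1}(g\Phi\,\Delta\,\Phi),$$
and each term has size less than $\varepsilon|\Phi|$ by the uniform choice of $I$. A union bound over the $|B_1|$ terms finishes the proof.

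The only real obstacle is the left/right convention described above; everything else is bookkeeping.
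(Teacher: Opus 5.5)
Your argument is the paper's own: you combine $\mathcal{N}\ge|\Phi_I|$ (via Lemma~\ref{LemmaInterior}) with $\mathcal{N}^\partial\le|G\setminus\Phi_I|$ and a union bound over $B_1$.

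The convention issue you flag is real, and the paper passes over it. The domains $hF$ and $kF$ share a facet iff $h^{-1}k\in\mathcal{G}$, so $hF\subset\Int Q_G$ needs $hB_1\subset G$. With the paper's $G=\bigcup_{g\in B_1}g\Phi_I=B_1\Phi_I$ and a left F{\o}lner sequence, this would require $\Phi_I B_1\subset B_1\Phi_I$. That inclusion fails in general for non-abelian $\Gamma$. For instance, right-translating $\Phi_I$ by a fixed element keeps it left F{\o}lner but can destroy the inclusion.

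Your repair is correct: use the right F{\o}lner sets $\Phi_i^{-1}$ and put $G=\Phi_I B_1$. Lemma~\ref{LemmaEss} is stated for an arbitrary finite $G$, so nothing downstream is affected.

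There is one slip in your final display. After switching conventions it should read
\[
G\setminus\Phi\subset\bigcup_{s\in B_1}(\Phi s\setminus\Phi),
\]
with each term of size $<\varepsilon|\Phi|$ by the right F{\o}lner property. As written, you reverted to the left-translates $g\Phi$, which do not match $G=\Phi B_1$.
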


 \begin{proof}
     Follows from the above inequalities and the F{\o}lner property.
 \end{proof}

Now we are ready to prove the theorem. By Lemma \ref{LemmaEss}, 

$$|\mathcal{E}| \leq \nu(F)\cdot \frac{(\mathcal{N}^\partial)^{n+1} +(n+1)\cdot \mathcal{N} \cdot (\mathcal{N}^\partial)^{n}}{2^n \cdot \Big(\mathcal{N}^\partial + \mathcal{N}\Big)^{n+1}}.$$

Denoting $\mathcal{N}^{\partial}/\mathcal{N}$ by $\mathbf{x}$ and using Corollary \ref{limit0},

$$|\mathcal{E}| \leq \nu(F) \cdot  \dfrac{\mathbf{x}^{n+1} + (n+1)\mathbf{x}^n}{(1+\mathbf{x})^{n+1}}\leq Const \cdot \varepsilon^n.$$ 
 
Since we can choose~$\varepsilon$ to be arbitrarily small,~$|\mathcal{E}|$  vanishes.

\end{document}